\numberwithin{equation}{section}
\newcommand{\define}[1]{\emph{#1}}
\def\R{\mathbb{R}}
\def\Z{\mathbb{Z}}
\def\lam{\lambda}
\def\Lam{\Lambda}
\def\1{\mathds{1}}
\renewcommand\le{\leqslant}
\renewcommand\ge{\geqslant}
\renewcommand\leq{\leqslant}
\renewcommand\geq{\geqslant}
\renewcommand\hat{\widehat}
\newcommand{\ft}[1]{\widehat #1}
\newcommand{\dotprod}[2]{\langle #1 , #2 \rangle}
\newcommand{\mes}{\operatorname{mes}}
\theoremstyle{plain}
\newtheorem{lem}{Lemma}[section]
\newtheorem{thm}[lem]{Theorem}
\newtheorem{corollary}[lem]{Corollary}
\newtheorem*{claim*} {Claim}
\newcommand{\thmref}[1]{Theorem~\ref{#1}}
\newcommand{\secref}[1]{Section~\ref{#1}}
\newcommand{\lemref}[1]{Lemma~\ref{#1}}
\newcommand{\corref}[1]{Corollary~\ref{#1}}
\theoremstyle{definition}
\newtheorem*{definition*}{Definition}
\newtheorem*{remarks*}{Remarks}
\newtheorem*{remark*}{Remark}
\newenvironment{enumerate-math}
{\begin{enumerate}
\addtolength{\itemsep}{5pt}
}
{\end{enumerate}}
\begin{document}

 \title{Spectrality and tiling by cylindric domains}

\author{Rachel Greenfeld}
\address{Department of Mathematics, Bar-Ilan University, Ramat-Gan 52900, Israel}
\email{rachelgrinf@gmail.com}

\author{Nir Lev}
\address{Department of Mathematics, Bar-Ilan University, Ramat-Gan 52900, Israel}
\email{levnir@math.biu.ac.il}

\thanks{Research partially supported by the Israel Science Foundation grant No. 225/13.}
\date{April 28, 2016}

\keywords{Fuglede's conjecture, spectral set, tiling}

\begin{abstract}
A bounded set $\Omega \subset \R^{d}$ is called a
spectral set if the space $L^{2}(\Omega)$ admits
a complete orthogonal system of exponential functions.
We prove that a cylindric set $\Omega$ is spectral if and only
if its base is a spectral set. A similar characterization is obtained
of the cylindric sets which can tile the space by translations.
\end{abstract}

\maketitle


\section{Introduction}

\subsection{}
Let $\Omega\subset \R^d$ be a bounded, measurable set of positive Lebesgue measure. A
discrete set $\Lambda\subset \R^d$ is called a \define{spectrum} for $\Omega$ if the
system of exponential functions
\begin{equation}
	\label{eqI1.1}
	E(\Lambda)=\{e_\lambda\}_{\lambda\in\Lambda}, \quad e_\lambda(x)=e^{2\pi
	i\dotprod{\lambda}{x}},  
\end{equation}
constitutes an orthogonal basis in $L^2(\Omega)$, that is, the system is orthogonal and
complete in the space. A set $\Omega$ which admits a spectrum $\Lambda$ is called a
\define{spectral set}.
 For example, if $\Omega$ is the unit cube in $\R^d$, then it is a
spectral set, and $\Lambda=\Z^d$ is a spectrum for $\Omega$.
\par
 The study of spectral sets was initiated in the paper \cite{Fug74} due to Fuglede (1974),
who conjectured that these
sets could be characterized geometrically in the following way: \define{the set 
$\Omega$ is spectral if and only if it can tile the space by translations}. We say that
$\Omega$ \define{tiles} the space by translations along a discrete set
$\Lambda\subset \R^d$ if the family of sets $\Omega+\lambda$ $(\lambda\in\Lambda)$
constitutes a partition of $\R^d$ up to measure zero. 
 Fuglede's conjecture inspired extensive research
over the years, and a number of interesting results supporting the conjecture had been
obtained. 
\par
For example, it was proved in \cite{Fug74} that if $\Omega$ tiles the space 
by translations along a \define{lattice}, then it is a spectral set. To the contrary, a triangle in
the plane \cite{Fug74}, or more generally, any convex non-symmetric domain in $\R^d$
\cite{Kol00a}, is not  spectral. It was also proved that the ball in $\R^d$
$(d\ge 2)$ is not a spectral set \cite{Fug74, IKP99, Fug01}, as well as any convex domain with a
smooth boundary \cite{IKT01}.  In \cite{IKT03} it was proved that a convex domain
$\Omega\subset \R^2$ is spectral if and only if it is either a parallelogram or a
centrally symmetric hexagon, which confirmed that Fuglede's conjecture is true
for convex domains in dimension $d=2$. See also the survey in \cite[Section~3]{Kol04}.
\par
On the other hand, in 2004 a counter-example to the ``spectral implies tiling''
part of the conjecture in dimensions $d\ge 5$ was found by Tao \cite{Tao04}.
Subsequently,  the ``tiling implies spectral'' part was also disproved, and the dimension in
these counter-examples (all of which are finite unions of unit cubes) was reduced up to
$d\ge 3$, see \cite[Section~4]{KM10} and the references given there. 
The conjecture is still open, though, in dimensions $d=1,2$ in both directions. 
\subsection{}
A bounded, measurable set $\Omega\subset\R^{d}$ $(d\ge 2)$ will be called a
\define{cylindric set} if it has the form
\begin{equation}
	\label{eqI1.2}
	\Omega=I\times \Sigma,
\end{equation}
where $I$ is an interval in $\R$, and $\Sigma$ is a measurable set in $\R^{d-1}$. 
In this case, the set $\Sigma$ will be called the \define{base} of the cylindric set $\Omega$.
\par
In this paper we are interested in the spectrality problem for cylindric sets. For
example, as far as we know, the following question has remained open: 
\emph{Let $\Omega$ be a cylindric set in $\R^d$ $(d\ge 3)$, 
whose base $\Sigma$ is the unit ball in $\R^{d-1}$. Is it a spectral set?}
\par
As the boundary of this set $\Omega$ is not piecewise flat (and, in particular, 
 $\Omega$ cannot tile), one would expect that the answer to this question should be negative.
However the approach in \cite{IKT01} does not apply in this situation, as it is based on
the existence of a point on the boundary of $\Omega$ where the Gaussian curvature is
non-zero, while for a cylindric set this curvature vanishes at every point where the
boundary is smooth. 
\par
The main result in this paper is the following: 
\begin{thm}
	\label{thmI1.3}
	A cylindric set $\Omega=I\times \Sigma$ is spectral (as a set in $\R^d$,
	$d\ge 2$) if and only if its base $\Sigma$ is a spectral set (as a set in
$\R^{d-1}$). 
\end{thm}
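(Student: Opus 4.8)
My plan is to treat the two implications separately, using throughout the \emph{Parseval criterion} for spectra, which I would first record as a standard lemma: a discrete set $\Lambda$ is a spectrum for a bounded measurable set $\Omega$ if and only if
\begin{equation*}
	\sum_{\lambda\in\Lambda}\big|\hat{\1}_\Omega(\xi-\lambda)\big|^{2}=|\Omega|^{2}\qquad(\xi\in\R^{d}),
\end{equation*}
the point being that this expresses that $\{|\Omega|^{-1/2}e_\lambda\}$ is a Parseval frame of unit vectors, hence an orthonormal basis. The decisive structural feature is that the transform factorises, $\hat{\1}_\Omega(t,y)=\hat{\1}_I(t)\,\hat{\1}_\Sigma(y)$, and that for an interval one has complete information: after rescaling the first coordinate I may assume $I=[0,1)$, so that $\hat{\1}_I(s)=0$ exactly for $s\in\Z\setminus\{0\}$ and, since $\Z$ is a spectrum for $I$, $\sum_{n\in\Z}|\hat{\1}_I(t-n)|^{2}=1$ for every $t$. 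I write points of $\Lambda$ as $(a,\beta)$ with $a\in\R$, $\beta\in\R^{d-1}$.

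For the easy direction, suppose $M\subset\R^{d-1}$ is a spectrum for $\Sigma$; I claim $\Lambda=\Z\times M$ is a spectrum for $\Omega$. Indeed the Parseval sum factors,
\begin{equation*}
	\sum_{(n,\mu)\in\Z\times M}\big|\hat{\1}_I(t-n)\big|^{2}\big|\hat{\1}_\Sigma(y-\mu)\big|^{2}=\Big(\sum_{n\in\Z}|\hat{\1}_I(t-n)|^{2}\Big)\Big(\sum_{\mu\in M}|\hat{\1}_\Sigma(y-\mu)|^{2}\Big)=|\Sigma|^{2}=|\Omega|^{2},
\end{equation*}
so the criterion holds. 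Equivalently, this is the statement that a tensor product of orthonormal bases of $L^{2}(I)$ and $L^{2}(\Sigma)$ is an orthonormal basis of $L^{2}(I)\otimes L^{2}(\Sigma)=L^{2}(\Omega)$.

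The hard direction is the converse: from a spectrum $\Lambda$ for $\Omega$ I must produce a spectrum for $\Sigma$. Orthogonality of $E(\Lambda)$, read through the factorisation, says that for distinct $(a,\beta),(a',\beta')\in\Lambda$ either $a-a'\in\Z\setminus\{0\}$ or $\hat{\1}_\Sigma(\beta-\beta')=0$. Two consequences organise $\Lambda$ over its base coordinates: if $\beta=\beta'$ then necessarily $a-a'\in\Z\setminus\{0\}$, so for each base value occurring in $\Lambda$ the first coordinates $A_\beta=\{a:(a,\beta)\in\Lambda\}$ lie in a single residue class $c_\beta+\Z$; and if $\hat{\1}_\Sigma(\beta-\beta')\ne0$ then $a\equiv a'\pmod 1$, i.e.\ $c_\beta=c_{\beta'}$. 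Thus, writing $B$ for the set of base values, two base values in distinct residue classes are automatically $\Sigma$-orthogonal. On the completeness side, grouping the Parseval identity for $\Omega$ by $\beta$ gives
\begin{equation*}
	\sum_{\beta\in B}\big|\hat{\1}_\Sigma(y-\beta)\big|^{2}\,F_\beta(t)=|\Sigma|^{2}\qquad(t\in\R,\ y\in\R^{d-1}),\qquad F_\beta(t):=\sum_{a\in A_\beta}\big|\hat{\1}_I(t-a)\big|^{2},
\end{equation*}
where $0\le F_\beta\le1$ because $A_\beta$ is a subset of a full coset.

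The whole argument then reduces to a single rigidity claim: \emph{each $F_\beta$ is constant in $t$}, equivalently each $A_\beta$ is the \emph{full} coset $c_\beta+\Z$. Granting this, $F_\beta\equiv1$, so the displayed identity collapses to $\sum_{\beta\in B}|\hat{\1}_\Sigma(y-\beta)|^{2}=|\Sigma|^{2}$, which is completeness for $B$; and within a residue class two base values share the common coset element, producing points $(a_0,\beta),(a_0,\beta')\in\Lambda$ with equal first coordinate, whence $\hat{\1}_\Sigma(\beta-\beta')=0$. Combined with the cross-class orthogonality already noted, $B$ is $\Sigma$-orthogonal, so $B$ is a spectrum and $\Sigma$ is spectral. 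The main obstacle is proving the rigidity claim. Here I would apply the Fourier transform in the $y$ variable: with the autocorrelation $h=\1_\Sigma*\tilde{\1}_\Sigma$, which is continuous, supported in $\Sigma-\Sigma$, satisfies $h(0)=|\Sigma|$ and $\hat h=|\hat{\1}_\Sigma|^{2}$, the identity becomes, as distributions,
\begin{equation*}
	h(u)\,\sum_{\beta\in B}F_\beta(t)\,e^{2\pi i\dotprod{\beta}{u}}=|\Sigma|^{2}\,\delta_0(u).
\end{equation*}
The task is to extract from this the constancy of the coefficients $F_\beta(t)$, which amounts to a linear-independence/rigidity statement for the translated autocorrelations $\hat{\1}_\Sigma(\cdot-\beta)|^{2}$ — the same phenomenon makes any \emph{proper} partial-coset sum $\sum_{a\in A_\beta}|\hat{\1}_I(t-a)|^{2}$ non-constant, since only the full integer comb, against the compactly supported $\1_I*\tilde{\1}_I$, reduces to a single $\delta_0$. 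I expect this separation argument, carried out uniformly over the possibly infinite family $B$, to be the technical heart of the proof.
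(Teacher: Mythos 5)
Your ``if'' direction is fine and coincides with the paper's. The ``only if'' direction, however, is built on a rigidity claim that is simply false, and worse, the statement you reduce the theorem to --- that the set $B$ of all base values of a spectrum $\Lambda$ is a spectrum for $\Sigma$ --- is itself false. Take $d=2$, $I=\Sigma=[0,1)$, and choose numbers $\tau(n)\in[0,1)$, $n\in\Z$, pairwise distinct (so $\tau(n)-\tau(n')\notin\Z$ for $n\ne n'$). Let
\[
\Lambda=\{(n,\,m+\tau(n))\;:\;n,m\in\Z\},
\]
i.e.\ the columns of $\Z^2$ shifted vertically by generic amounts. This is a spectrum for the unit square: orthogonality is immediate (two distinct points either have first coordinates differing by a nonzero integer, or share the same $n$ and have second coordinates differing by a nonzero integer), and completeness follows from your own Parseval criterion, since
\[
\sum_{n,m}|\hat{\1}_I(t-n)|^2\,|\hat{\1}_\Sigma(y-m-\tau(n))|^2
=\sum_{n}|\hat{\1}_I(t-n)|^2\Big(\sum_{m}|\hat{\1}_\Sigma(y-m-\tau(n))|^2\Big)
=\sum_{n}|\hat{\1}_I(t-n)|^2=1 .
\]
For this $\Lambda$, each base value $\beta=m+\tau(n)$ determines $n$ uniquely, so $A_\beta=\{n\}$ is a \emph{singleton}, not a full coset; your rigidity claim fails, and $F_\beta\not\equiv 1$. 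Moreover $B=\bigcup_n(\Z+\tau(n))$ is not even an orthogonal set for $\Sigma$: for $n\ne n'$ the differences across the two progressions are non-integers, where $\hat{\1}_{[0,1)}$ does not vanish. So the obstacle is not the ``technical heart'' you postpone (the linear-independence statement for translated autocorrelations); the target of your reduction is wrong, and no refinement of that argument can recover the theorem.

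What the example does show is that the correct object is a single \emph{fiber}, not the union of base values: here $\Gamma_k=\{\gamma:(k,\gamma)\in\Lambda\}=\Z+\tau(k)$ is a spectrum for $\Sigma$ for each fixed $k$. This is exactly how the paper proceeds, and it requires an idea absent from your proposal: one first \emph{modifies} the given spectrum so that it lies in $\Z\times\R^{d-1}$. Concretely, the paper shows that translating the part of the spectrum with non-integer first coordinate by $\tau(t)=(t,0,\dots,0)$ again yields a spectrum (proved via a packing-versus-tiling argument applied to $|\hat{\1}_\Omega|^2/|\Omega|^2$), iterates such moves over a sequence of balls covering $\R^{d-1}$, and passes to a weak limit --- weak limits of spectra being spectra --- to obtain a new spectrum $\Lambda'\subset\Z\times\R^{d-1}$. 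Only then does it slice: for a spectrum contained in $\Z\times\R^{d-1}$, each fiber $\Gamma_k$ is orthogonal for $\Sigma$ by the factorization of $\hat{\1}_\Omega$, and complete by a duality argument (if $f\perp E(\Gamma_k)$ in $L^2(\Sigma)$, then $e_k\otimes f\perp E(\Lambda')$ in $L^2(\Omega)$). Your Parseval bookkeeping (the cosets $c_\beta+\Z$ and the functions $F_\beta$) is correct as far as it goes, but without the modification-and-weak-limit step it cannot reach the conclusion.
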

Thus we obtain a characterization of the cylindric spectral sets $\Omega$ in terms of the
spectrality of their base $\Sigma$. 
\par
In particular this confirms the negative answer
to the question stated above, due to the results in \cite{Fug74, IKP99, Fug01}. More generally,
we obtain the following:
\par
\emph{Let $\Omega$ be a cylindric set in $\R^d$ $(d\ge 3)$ whose base $\Sigma$ is a
	convex body in $\R^{d-1}$ with a smooth boundary. Then $\Omega$ is not a spectral
	set.}
\par
This follows from \thmref{thmI1.3} and the result in \cite{IKT01}. If we combine
\thmref{thmI1.3} with the result in \cite{IKT03}, it yields the following corollary in
dimension $d=3$:
\par
\emph{Let $\Omega$ be a cylindric convex body in $\R^3$. Then $\Omega$ is a spectral set
	if and only if $\Omega$ is either a parallelepiped or a centrally symmetric
	hexagonal prism.}
\par
\subsection{}
There is a commonly believed principle which states that for any result about spectral
sets there is an analogous result about sets which can tile by translations, and vice
versa. The result analogous to \thmref{thmI1.3} concerning tiling, for which we also
provide a proof in this paper, is the following: 
\begin{thm}
	\label{thmI1.6}
	A cylindric set $\Omega=I\times \Sigma$ can tile $\R^d$ $(d\ge 2)$ by
	translations if and only if its base $\Sigma$ tiles $\R^{d-1}$ by translations. 
\end{thm}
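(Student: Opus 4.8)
The plan is to reduce everything to the defining covering identity of a tiling and then exploit the product structure of $\Omega$ by slicing. Recall that a bounded measurable set of positive measure tiles $\R^d$ by translations along a discrete set $T$ if and only if $\sum_{t\in T}\1_\Omega(x-t)=1$ for a.e.\ $x\in\R^d$. Writing a point of $\R^d$ as $x=(s,y)$ with $s\in\R$, $y\in\R^{d-1}$, and an element of a translation set correspondingly as $t=(\tau,\eta)$, the tiling condition for $\Omega=I\times\Sigma$ reads $\sum_{(\tau,\eta)\in T}\1_I(s-\tau)\,\1_\Sigma(y-\eta)=1$ a.e. Since $|\Omega|=|I|\,|\Sigma|>0$, both factors have positive measure, a fact I will use below to guarantee discreteness of the translation sets produced.

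For the \emph{if} direction I would simply multiply tilings. Assume $\Sigma$ tiles $\R^{d-1}$ along a discrete set $\Lambda$, and normalize $I=[0,a)$, so that $I$ tiles $\R$ along $a\Z$. Then, taking $T=a\Z\times\Lambda$, the covering sum factors as $\sum_{k\in\Z}\sum_{\lambda\in\Lambda}\1_I(s-ak)\,\1_\Sigma(y-\lambda)=\bigl(\sum_k\1_I(s-ak)\bigr)\bigl(\sum_\lambda\1_\Sigma(y-\lambda)\bigr)=1$ for a.e.\ $(s,y)$, so $\Omega$ tiles $\R^d$ along $T$. This direction uses nothing about $I$ beyond the fact that an interval tiles $\R$.

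The substance is in the \emph{only if} direction, and here the idea is to slice $\R^d$ by a hyperplane transverse to the cylinder axis. Suppose $\Omega$ tiles $\R^d$ along a discrete set $T$, so the covering identity holds for a.e.\ $(s,y)$. By Fubini there is a full-measure set of levels $s_0\in\R$ for which $\sum_{(\tau,\eta)\in T}\1_I(s_0-\tau)\,\1_\Sigma(y-\eta)=1$ for a.e.\ $y$. Fix such an $s_0$. Because $\1_I(s_0-\tau)\in\{0,1\}$, the identity collapses to $\sum_{\eta\in\Lambda_{s_0}}\1_\Sigma(y-\eta)=1$ for a.e.\ $y$, where $\Lambda_{s_0}:=\{\eta:(\tau,\eta)\in T,\ s_0-\tau\in I\}$ is the projection to $\R^{d-1}$ of the part of $T$ lying in the slab $(s_0-I)\times\R^{d-1}$. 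This is exactly the covering identity asserting that $\Sigma$ tiles $\R^{d-1}$ along $\Lambda_{s_0}$; note that, as in the first direction, this step never uses that $I$ is an interval.

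The main point left to verify is that $\Lambda_{s_0}$ is a legitimate translation set, that is, a uniformly discrete subset of $\R^{d-1}$ along which the translates of $\Sigma$ are essentially disjoint and occur without repetition. I would deduce this from the positivity $|\Sigma|>0$: the autocorrelation $v\mapsto|\Sigma\cap(\Sigma+v)|$ is continuous and equals $|\Sigma|>0$ at $v=0$, so there is $\delta>0$ with $|\Sigma\cap(\Sigma+v)|>0$ whenever $|v|<\delta$; hence two distinct points $\eta,\eta'$ of $\Lambda_{s_0}$ cannot lie within distance $\delta$, for otherwise $\Sigma+\eta$ and $\Sigma+\eta'$ would overlap on a set of positive measure and the covering multiplicity would exceed $1$ there. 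Thus $\Lambda_{s_0}$ is uniformly discrete, and in particular a genuine set rather than a multiset, which completes the argument. The only obstacles are these bookkeeping points—the a.e.\ reduction via Fubini and the uniform discreteness of the sliced translation set—rather than any genuine difficulty; in contrast to the spectral statement, the product structure of $\Omega$ together with the positivity of tilings does all the real work.
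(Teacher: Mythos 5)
Your proof is correct, but it is genuinely different from the one in the paper. The paper deliberately proves the tiling theorem ``along the same lines'' as the spectral one: it first shows (Lemma \ref{lemM1.2}, by an induction on integer gaps) that translates in a tiling either differ by an integer in the first coordinate or have essentially disjoint $\Sigma$-translates, then uses the shift maps $\alpha_t$ (Lemma \ref{lemC1.4}), an iterative construction, and weak limits of tilings (Corollary \ref{corB1.1}, Lemma \ref{lemC2.1}) to replace the given translation set by one contained in $\Z\times\R^{d-1}$, from which Lemma \ref{lemC2.2} extracts a tiling of $\Sigma$. You bypass all of this: your Fubini/slicing argument reads off a tiling of $\Sigma$ directly from an almost-every horizontal slice, with the autocorrelation continuity of $\1_\Sigma$ supplying the uniform discreteness and injectivity of the projected translation set $\Lambda_{s_0}$ (this last point is the one real pitfall, and you handle it properly --- without it the slice identity would only be a multiset statement). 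Notably, as you observe, your ``only if'' argument never uses that $I$ is an interval, so it actually proves the stronger statement recorded in the paper's ``Note added in proof'' and attributed there to Kolountzakis: if $A\times B$ tiles $\R^{n+m}$ then both $A$ and $B$ tile. What the paper's heavier route buys is uniformity with the spectral case, where no slicing argument is available (completeness of an exponential system cannot be checked slice by slice), so the weak-limit machinery is indispensable there and the tiling proof comes along for free; what your route buys is brevity, elementarity, and the more general product statement.
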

Thus we have a similar characterization of the cylindric sets which can tile.
\par
It is known, see  \cite{McM80}, that a cylindric convex body in $\R^3$ which tiles by
translations must be either a parallelepiped or a centrally
symmetric hexagonal prism (this can also be derived based on
\thmref{thmI1.6}). Hence we obtain:
\begin{corollary}
	\label{corI1.7}
	A cylindric convex body $\Omega \subset \R^3$ is a spectral set
	if and only if it can tile by translations.
\end{corollary}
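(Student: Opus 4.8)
The plan is to reduce both the spectrality and the tiling properties of the cylinder $\Omega$ to the corresponding properties of its planar base, and then to exploit the fact that Fuglede's conjecture is known for convex planar domains. Since the corollary is an equivalence of two statements that have each already been characterized geometrically in the text, the proof amounts to threading together \thmref{thmI1.3}, \thmref{thmI1.6}, and the cited planar results.

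First I would record the one geometric point that requires care: a cylindric convex body $\Omega\subset\R^3$ must have the form $\Omega=I\times\Sigma$ with $I$ an interval and $\Sigma$ a \emph{convex} body in $\R^2$. Indeed, convexity of the product $I\times\Sigma$ forces each factor to be convex, and this is precisely what guarantees that the planar results quoted below apply to the base $\Sigma$.

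Next I would assemble the following chain of equivalences. By \thmref{thmI1.3}, $\Omega$ is spectral if and only if $\Sigma$ is spectral. By the theorem of \cite{IKT03}, a convex planar domain $\Sigma$ is spectral if and only if it is a parallelogram or a centrally symmetric hexagon; and by the classical characterization of convex planar translational tiles, these are exactly the convex domains that tile $\R^2$ by translations. Hence $\Sigma$ is spectral if and only if $\Sigma$ tiles $\R^2$. Finally, by \thmref{thmI1.6}, $\Sigma$ tiles $\R^2$ by translations if and only if $\Omega$ tiles $\R^3$ by translations. Combining these gives
\[
\Omega \text{ spectral} \iff \Sigma \text{ spectral} \iff \Sigma \text{ tiles } \R^2 \iff \Omega \text{ tiles } \R^3,
\]
which is the assertion of the corollary.

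I do not expect a genuine obstacle, as all the substantive work is carried by \thmref{thmI1.3}, \thmref{thmI1.6}, and the cited planar classifications; the argument is essentially a bookkeeping of equivalences. The only place to remain vigilant is that the \emph{same} geometric class -- parallelograms and centrally symmetric hexagons -- governs both ends of the chain: the spectral side via \cite{IKT03}, and the tiling side via the planar tiling classification (equivalently, at the level of the prism, via \cite{McM80}). Alternatively, one could phrase the whole argument without reducing to the base: \thmref{thmI1.3} together with \cite{IKT03} shows that $\Omega$ is spectral if and only if it is a parallelepiped or a centrally symmetric hexagonal prism, while \thmref{thmI1.6} together with \cite{McM80} shows that $\Omega$ tiles if and only if it is one of the same two shapes, and the corollary follows at once.
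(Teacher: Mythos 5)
Your proposal is correct and follows essentially the same route as the paper: both thread together \thmref{thmI1.3}, \thmref{thmI1.6}, the planar spectral classification of \cite{IKT03}, and the classification of convex translational tiles (via \cite{McM80} at the level of the prism, or equivalently the classical planar classification at the level of the base, which the paper itself notes are interchangeable through \thmref{thmI1.6}). In particular, your alternative phrasing in the final paragraph is precisely the paper's argument: the spectral side gives parallelepipeds and centrally symmetric hexagonal prisms, and McMullen's result gives the same two shapes on the tiling side.
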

In other words, Fuglede's conjecture is true for cylindric convex bodies $\Omega$ in $\R^3$.
The latter conclusion plays an important role in our paper \cite{GriLev16b}, where we establish that
Fuglede's conjecture is true for all \emph{convex polytopes}  in $\R^3$. 
\subsection{}
The ``if'' part of \thmref{thmI1.3} is obvious. Indeed,
suppose that $\Sigma$ is a spectral set, and assume for simplicity that
$I=[-\frac{1}{2},\frac{1}{2}]$. If $\Gamma\subset\R^{d-1}$ is a spectrum for $\Sigma$,
then it is easy to check that $\Lambda=\Z\times\Gamma$ is a spectrum for $\Omega$, and
hence $\Omega$ is spectral.
\par
On the other hand, the converse, ``only if'' part of the
result, is non-trivial. Roughly speaking, the difficulty lies in that knowing $\Omega$ to
have a spectrum $\Lambda$ in no way implies that $\Lambda$ has a product structure as
$\Z\times \Gamma$. In particular, we do not have any obvious candidate for a set
$\Gamma\subset\R^{d-1}$ that might serve as a spectrum for $\Sigma$.
\par
To address this difficulty we adapt (and simplify) an
approach from the paper \cite{IP98} due to Iosevich and Pedersen. The main result in that
paper is a characterization of the spectra of the unit cube in $\R^d$ by a tiling condition
(in connection with this result, see also \cite{JP99, LRW00, Kol00b}). 
The approach involves an iterative procedure, where in each step a certain
modification to the given spectrum is performed, which yields a new spectrum for $\Omega$.
This produces an infinite sequence of sets $\Lambda_n$ each one of which is a spectrum for $\Omega$. 
\par
We can prove that for an appropriately chosen series of modifications,
the sequence $\Lambda_n$ converges weakly to a limit $\Lambda'$
which is also a spectrum for $\Omega$, and which satisfies the additional condition that 
\[ \Lambda'\subset \Z\times \R^{d-1}. \]
It is then possible to use a result due to Jorgensen and Pedersen \cite{JP99} which yields that the
cylinder's base $\Sigma$ must be a spectral set, and thus we obtain \thmref{thmI1.3}.
\par
 The proof of \thmref{thmI1.6} is given along similar lines.


\section{Spectrality and Tiling}
We start by recalling some basic properties of spectra and tilings that will be used in the next sections. 

\subsection{}

Let $\Omega\subset\R^d$ be a bounded, measurable set of positive measure. A discrete set
$\Lambda\subset\R^d$ is called a spectrum for $\Omega$ if the system of exponential functions
$E(\Lambda)$ defined by \eqref{eqI1.1} is an orthogonal basis in the space $L^2(\Omega)$.
\par
For any two points $\lam,\lam'$ in $\R^d$ we have
\[
		\dotprod{e_\lambda}{e_{\lambda'}}_{L^2(\Omega)} 
= \hat{\1}_\Omega(\lambda'-\lambda), 
\]
where
\[
\hat{\1}_\Omega(\xi) = \int_{\Omega} e^{-2\pi i\langle \xi,x\rangle} dx, \quad \xi \in \R^d
\]
is the Fourier transform of the indicator function $\1_\Omega$ of the set $\Omega$.
It follows that the orthogonality of the system $E(\Lambda)$ in $L^2(\Omega)$ is equivalent to the
condition
\begin{equation}
	\label{eqP1.2}
	\Lambda-\Lambda\subset \{\hat{\1}_\Omega=0\} \cup \{0\}.
\end{equation}

\subsection{}
Let $f \geq 0$ be a measurable function on $\R^d$. We say that $f$
 \emph{tiles}  $\R^d$ by  translations along a discrete set
$\Lambda\subset\R^d$ if we have
\begin{equation}
\label{1.1}
\sum_{\lambda\in\Lambda}f(x-\lambda)=1\quad\text{a.e.}
\end{equation}
In this case we will shortly write that  $f+\Lam$ is a \emph{tiling}.
\par
If $f=\1_\Omega$ is the indicator function of a bounded, measurable set
$\Omega\subset\R^d$, then the condition \eqref{1.1}
means that the family of sets $\Omega+\lam$ $(\lam\in\Lam)$ constitutes
a partition of $\R^d$ up to measure zero. In this case, we will say that  $\Omega+\Lam$  is a tiling.

\subsection{}
A set $\Lambda\subset \R^d$ is said to be \define{uniformly discrete} if there is
$\delta>0$ such that $|\lambda'-\lambda|\ge \delta$ for any two distinct points
$\lambda,\lambda'$ in $\Lambda$. The maximal constant $\delta$ with this property is
called the \define{separation constant} of $\Lambda$, and will be denoted by
$\delta(\Lambda)$.
\par
 The condition \eqref{eqP1.2} implies that if $\Lambda$ is a spectrum
for $\Omega$ then it is a uniformly discrete set, with separation constant
$\delta(\Lambda)$ which is not smaller than 
\begin{equation}
	\label{eqP1.4}
	\chi(\Omega):= \min \big\{ |\xi| \;:\; \xi\in\R^d, \;\; \hat{\1}_\Omega(\xi)=0
	\big\}> 0.
\end{equation}
\par
Also if $\Omega+\Lam$ is a tiling then the  set $\Lam$ must be uniformly discrete,
and in this case the separation constant $\delta(\Lam)$ is not less than
\begin{equation}
	\label{eqP1.5}
	\eta(\Omega):=\min  \big\{ |x| \;:\;  x\in\R^d, \;\; \mes (\Omega \cap (\Omega+x)) = 0 \big\} > 0.
\end{equation}
This is due to the sets $\Omega + \lam$ $(\lam \in \Lam)$ being pairwise disjoint up to measure zero.

\subsection{}
We denote by $|\Omega|$ the Lebesgue measure of the set $\Omega$.
The following lemma gives a characterization of the spectra of $\Omega$ by a tiling condition:
\begin{lem}[\cite{Kol00b}]
	\label{lem:tilespec}
	Let $\Omega\subset\R^d$ be a bounded, measurable set, and define  \[f(x):= |\ft{\1_\Omega}(x)|^2 / |\Omega|^2,\quad x\in\R^d.\]
	Then  a set $\Lam\subset\R^d$ is a spectrum for $\Omega$ if and only if $f+\Lam$ is a tiling.
\end{lem}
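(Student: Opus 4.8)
The plan is to recognize the tiling sum as a Bessel--Parseval sum for a test exponential relative to the system $E(\Lam)$, and then to read off both implications from the equality case in Bessel's theorem. Set $F(t):=\sum_{\lam\in\Lam}f(t-\lam)\in[0,\infty]$. The starting point is the inner product formula recalled above, which gives $\dotprod{e_t}{e_\lam}_{L^2(\Omega)}=\ft{\1_\Omega}(\lam-t)$ for every $t\in\R^d$; since $\1_\Omega$ is real this yields $|\dotprod{e_t}{e_\lam}_{L^2(\Omega)}|^2=|\Omega|^2 f(t-\lam)$. Dividing by $\|e_\lam\|_{L^2(\Omega)}^2=\ft{\1_\Omega}(0)=|\Omega|$ and summing, I obtain the key identity
\[
\sum_{\lam\in\Lam}\frac{|\dotprod{e_t}{e_\lam}_{L^2(\Omega)}|^2}{\|e_\lam\|_{L^2(\Omega)}^2}=|\Omega|\,F(t),
\qquad \|e_t\|_{L^2(\Omega)}^2=|\Omega|.
\]
Thus the tiling equation $F(t)=1$ says exactly that Parseval's identity holds for the vector $e_t$ relative to the normalized system $\{e_\lam/\sqrt{|\Omega|}\}_{\lam\in\Lam}$.

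For the forward direction I would argue as follows. If $\Lam$ is a spectrum, then $\{e_\lam/\sqrt{|\Omega|}\}$ is an orthonormal basis of $L^2(\Omega)$, so Parseval's identity holds for \emph{every} vector, in particular for each $e_t$; by the identity above this reads $F(t)=1$ for all $t$, i.e.\ $f+\Lam$ is a tiling.

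For the converse, assume $F(t)=1$ for a.e.\ $t$. First I would establish \emph{orthogonality}, namely $f(\lam-\lam')=0$ for all distinct $\lam,\lam'\in\Lam$, which by \eqref{eqP1.2} is the orthogonality of $E(\Lam)$. Suppose instead that $a:=f(\lam_1-\lam_2)>0$ for some distinct $\lam_1,\lam_2\in\Lam$. Since $f$ is continuous with $f(0)=1$, I can choose a neighbourhood $U$ of $\lam_1$ on which simultaneously $f(t-\lam_1)>1-\tfrac12 a$ and $f(t-\lam_2)>\tfrac12 a$; then $F(t)\ge f(t-\lam_1)+f(t-\lam_2)>1$ on $U$, a set of positive measure, contradicting $F=1$ a.e. Hence $\{e_\lam/\sqrt{|\Omega|}\}$ is an orthonormal system; let $V\subset L^2(\Omega)$ be its closed linear span. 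For \emph{completeness}, note that for each $t$ in the full-measure set where $F(t)=1$, the identity above is Parseval's identity for $e_t$, which for an orthonormal system means precisely $e_t\in V$. Finally, the exponentials $\{e_t\}$ over a set of full measure are total in $L^2(\Omega)$: if $g\in L^2(\Omega)$ is orthogonal to all of them, then $\ft{g\1_\Omega}$ vanishes a.e., hence $g\1_\Omega=0$ and $g=0$. Therefore $V=L^2(\Omega)$, so $E(\Lam)$ is complete and $\Lam$ is a spectrum.

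The only real subtlety, and the part I expect to be the main obstacle, lies in the converse: the tiling identity is available only \emph{almost everywhere}, whereas orthogonality concerns the values of $f$ on the measure-zero difference set $\Lam-\Lam$. The tempting route is to prove that $F$ is continuous and upgrade $F=1$ to hold everywhere, but this would force me to control the local density of $\Lam$ together with the decay of $f$. I avoid this entirely by working with the single continuous function $f$ and the positivity of all terms, localizing the contradiction to a neighbourhood of one point of $\Lam$; no global convergence information about the sum is needed. Completeness, by contrast, is insensitive to the a.e.\ issue, since totality of the exponentials requires only a full-measure set of frequencies $t$.
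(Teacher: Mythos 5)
Your proof is correct and follows essentially the same route as the paper: the forward direction is Parseval's identity applied to the exponentials $e_t$, and the converse establishes orthogonality of $E(\Lam)$ from continuity and positivity, then completeness via the equality case of Bessel's inequality together with the totality of $\{e_t\}$ over a full-measure set of frequencies $t$. The only difference is cosmetic: where the paper upgrades the a.e.\ tiling to an \emph{everywhere} packing bound and evaluates it at the points of $\Lam$, you localize a two-term contradiction in a neighbourhood of a single point of $\Lam$ --- the same continuity-plus-positivity mechanism, just applied locally rather than globally.
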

\begin{proof}
	Assume first that $\Lam$ is a spectrum for $\Omega$, so the system $E(\Lam)$ is orthogonal and complete in $L^2(\Omega)$. Hence by Parseval's equality
	\begin{equation}
	\label{eq:Parseval}
	\sum_{\lam\in\Lam}|\langle  e_\lam,g\rangle |^2=|\Omega|\cdot\left\| g\right\| ^2
	\end{equation}
	for every $g\in L^2(\Omega)$. In particular, using \eqref{eq:Parseval} for $g=e_x$ yields 
	\begin{equation}
	\label{eq:tilebyLam}
	\sum_{\lam\in\Lam}|\ft{\1}_\Omega(x-\lam)|^2=|\Omega|^2,
	\end{equation}
	for every $x\in\R^d$. Hence $f+\Lam$ is a tiling.
	
	Conversely, suppose that \eqref{eq:tilebyLam} holds for almost every $x\in\R^d$. Since $\ft{\1}_\Omega$ is a continuous function, the left-hand side of \eqref{eq:tilebyLam} must be \emph{everywhere} not greater than $|\Omega|^2$. Using this with $x$ going through the elements of $\Lam$ yields that $E(\Lam)$ is an orthogonal system in $L^2(\Omega)$. Moreover, the system $E(\Lam)$ spans every exponential $e_x$ for which the equality in \eqref{eq:tilebyLam} is satisfied.
	Since this holds for a dense set of points $x$ in $\R^d$, the system $E(\Lam)$ is complete in $L^2(\Omega)$. Hence $\Lam$ is a spectrum for $\Omega$.
\end{proof}


\section{Limits of spectra and tilings}\label{sec:Weak-L}

\subsection{}
Let $\Lambda_n$ be a sequence of uniformly discrete sets in $\R^d$, with separation constants
$\delta(\Lambda_n)\ge\delta>0$. The sequence $\Lambda_n$ is said to \define{converge
weakly} to a set $\Lambda$ if for every $\varepsilon>0$ and every $R$ there is
$N$ such that 
\[
\Lambda_n\cap B_R\subset \Lambda+B_\varepsilon \quad \text{and} \quad \Lambda\cap B_R\subset
\Lambda_n+B_\varepsilon
\]
 for all $n\ge N$, where by $B_r$ we denote the open ball of
radius $r$ centered at the origin.
\par
 In this case the weak limit $\Lambda$ is also 
uniformly discrete, and moreover, $\delta(\Lambda)\ge \delta$. 
\begin{lem}
	\label{lem:limtile}
	Let $ \Lambda_n$ be a sequence  of uniformly discrete sets in $\R^d$, $\delta(\Lam_n)\geq\delta>0$, which converges weakly to a set $\Lambda$. Suppose that  $f\in L^1(\R^d)$, $f\geq 0$, and that  $f+\Lam_n$ is a tiling for every $n$.  Then also $f+\Lam$ is a tiling.
\end{lem}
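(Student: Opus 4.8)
The plan is to test the tiling identity against non-negative continuous functions of compact support and pass to the weak limit term by term. Fix $\phi\in C_c(\R^d)$ with $\phi\ge 0$ and $\supp\phi\subset B_{R_0}$, and set $\psi(\lambda):=\int_{\R^d}\phi(x)f(x-\lambda)\,dx$. Since $f\in L^1$ and $\phi$ is bounded with compact support, $\psi$ is a bounded continuous function on $\R^d$. For each fixed $n$, Tonelli's theorem together with the tiling $f+\Lam_n$ gives $\sum_{\lambda\in\Lam_n}\psi(\lambda)=\int\phi(x)\sum_{\lambda\in\Lam_n}f(x-\lambda)\,dx=\int\phi$. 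The goal is therefore to show that $\sum_{\lambda\in\Lam_n}\psi(\lambda)\to\sum_{\lambda\in\Lam}\psi(\lambda)$ as $n\to\infty$; once this is established we obtain $\sum_{\lambda\in\Lam}\psi(\lambda)=\int\phi$, and reversing the Tonelli computation for $\Lam$ yields $\int\phi(x)\sum_{\lambda\in\Lam}f(x-\lambda)\,dx=\int\phi$ for every such $\phi$. This forces $\sum_{\lambda\in\Lam}f(x-\lambda)=1$ a.e., i.e. $f+\Lam$ is a tiling.

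The main step is a tail estimate, uniform in $n$, and this is where the uniform discreteness is essential. For $|\lambda|>T$ we have $|\psi(\lambda)|\le\|\phi\|_\infty\int_{B_{R_0}-\lambda}f$, and $B_{R_0}-\lambda\subset\{|y|>T-R_0\}$. The key observation is that, since every set $\Lam_n$ (and also the weak limit $\Lam$) has separation constant at least $\delta$, any ball of radius $R_0$ contains at most $N=N(R_0,\delta)$ of its points; hence for each fixed $y$ at most $N$ of the translated balls $B_{R_0}-\lambda$ can contain $y$. Summing and using this bounded overlap gives
\[
\sum_{\lambda\in\Lam_n,\,|\lambda|>T}|\psi(\lambda)|\le \|\phi\|_\infty\, N(R_0,\delta)\int_{|y|>T-R_0}f(y)\,dy,
\]
and the same bound holds with $\Lam_n$ replaced by $\Lam$. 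Since $f\in L^1$, the right-hand side tends to $0$ as $T\to\infty$, so the tails are small uniformly in $n$. In particular $\sum_{\lambda\in\Lam}\psi(\lambda)$ is absolutely convergent and $\sum_{\lambda\in\Lam}f(\cdot-\lambda)\in L^1_{\mathrm{loc}}$, so the limiting object is well defined.

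It remains to treat the finite portion of the sum over a fixed ball $B_T$. Choose $T$ (which we may take as large as we like, and, since $\Lam$ has only finitely many points in any bounded region, may assume carries no point of $\Lam$ on the sphere $\partial B_T$) so large that the tail bound above is $<\eps$ for every $n$ and for $\Lam$. By the weak convergence $\Lam_n\to\Lam$ together with the separation bound, for all large $n$ the points of $\Lam_n\cap B_T$ can be matched bijectively with those of $\Lam\cap B_T$ so that paired points lie within a prescribed small distance of one another; the uniform continuity of $\psi$ on the compact set $\overline{B_T}$ then gives $\sum_{\lambda\in\Lam_n\cap B_T}\psi(\lambda)\to\sum_{\lambda\in\Lam\cap B_T}\psi(\lambda)$. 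Combining the finite part with the two tail estimates yields $\limsup_n\bigl|\sum_{\lambda\in\Lam_n}\psi(\lambda)-\sum_{\lambda\in\Lam}\psi(\lambda)\bigr|\le 2\eps$, and letting $\eps\to 0$ finishes the argument. I expect the tail estimate—extracting a bound uniform in $n$ from a merely $L^1$ function $f$ carrying no decay rate—to be the main obstacle, and the bounded-overlap consequence of uniform discreteness is precisely what resolves it.
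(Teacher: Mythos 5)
Your proof is correct: each step (the Tonelli exchange, the bounded-overlap tail estimate, the point-matching for the finite part, and the final density argument forcing $\sum_{\lambda\in\Lambda}f(x-\lambda)=1$ a.e.) goes through as you describe.

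It is worth noting how your route differs from the paper's. Both proofs test the tiling identity against a compactly supported function and use Tonelli, so both must ultimately justify the interchange $\lim_n \int \phi(x)\sum_{\lambda\in\Lambda_n}f(x-\lambda)\,dx=\int\phi(x)\sum_{\lambda\in\Lambda}f(x-\lambda)\,dx$; the difference lies in the mechanism. The paper introduces the periodizations $\Phi_n(x)=\sum_{\lambda\in\Lambda_n}\varphi(x+\lambda)$ and argues softly: weak convergence of $\Lambda_n$ gives $\int\psi\,\Phi_n\to\int\psi\,\Phi$ for \emph{smooth compactly supported} $\psi$ (there the sums over $\lambda$ are finite, so no tails appear), the $\delta$-separation gives the uniform bound $\|\Phi_n\|_\infty\le C(\varphi,\delta)$, and density of such $\psi$ in $L^1$ upgrades the convergence to weak-star convergence in $L^\infty$, which permits testing against the merely integrable $f$. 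Your argument replaces this functional-analytic step by explicit spatial truncation: the bounded-overlap count $N(R_0,\delta)$ --- which is exactly the same counting consequence of $\delta$-separation that yields the paper's bound on $\|\Phi_n\|_\infty$ --- gives a tail bound uniform in $n$, which amounts to approximating $f$ in $L^1$ by its truncations rather than by smooth functions, and the finite part is handled by bijective point-matching plus uniform continuity of your $\psi$. So the ingredients are identical (separation for a uniform bound, weak convergence for the local part, integrability of $f$ for the remainder), but your execution is more elementary and self-contained, avoiding the weak-star limit at the cost of extra bookkeeping (the choice of $T$ avoiding $\partial B_T$, the matching argument), whereas the paper's version is shorter because the $L^1$-density step silently absorbs the entire tail analysis.
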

\begin{proof}
It would be enough to show that if $\varphi\geq 0$ is a smooth, compactly supported function on $\R^d$,  $ \int \varphi(x) dx =1$, then 
 	\begin{equation}
 	\label{eq:enoughcondition}
 \int_{\R^d} \varphi(x)\sum_{\lam\in\Lam} f(x-\lam)\,dx =1.
 	\end{equation}
Fix such a function $\varphi$, and define 
\begin{equation}
\label{eq:definePhi}
\Phi_{n}(x):=\sum_{\lam\in\Lam_n}\varphi(x+\lam).
\end{equation} 
The weak convergence of $\Lam_n$ to $\Lam$ implies that for any smooth, compactly supported function $\psi$ on $\R^d$ we have 
\begin{equation}
\label{eq:lim}
	\lim_{n\to\infty} \int_{\R^d} \psi(x)\Phi_{n}(x)dx=\int_{\R^d}\psi(x)\Phi(x)dx,
\end{equation}
where $\Phi(x)$ is defined as in \eqref{eq:definePhi} but with $\Lam$ instead of $\Lam_n$. 
Moreover, we have $\|\Phi_{n}\|_\infty\leq C$, where the constant $C=C(\varphi,\delta)$
 does not depend on $n$.
Hence $\Phi_{n}$ converges weakly in $L^\infty(\R^d)$ to $\Phi$.
In particular, \eqref{eq:lim} is satisfied also for the function $\psi=f$ which is
in $L^1(\R^d)$.
But since $f+\Lam_n$ is a tiling, the left-hand side of \eqref{eq:lim} in this case is equal to $1$.
It follows that also the right-hand side of \eqref{eq:lim} must be $1$, which implies \eqref{eq:enoughcondition}.
\end{proof}

\subsection{}
By applying  \lemref{lem:limtile} to the function $f=|\ft{\1}_\Omega|^2/|\Omega|^2$ and to
$f=\1_\Omega$, we obtain:
\begin{corollary}\label{corB1.1}
	Let $\Omega\subset\R^d$ be a bounded, measurable set.
\begin{enumerate-math}
	\item
	Suppose that for each $n$, the set $\Lam_n$ is a spectrum for $\Omega$. 
	If $\Lam_n$ converges weakly to $\Lam$, then also $\Lam$ is a spectrum for $\Omega$.
	\item
	Suppose that $\Omega+\Lam_n$ is a tiling for every $n$.
	If $\Lam_n$ converges weakly to  $\Lam$,  then also $\Omega+\Lam$ is a tiling.
\end{enumerate-math}
\end{corollary}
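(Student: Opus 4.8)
The plan is to apply \lemref{lem:limtile} twice, once with $f=|\ft{\1}_\Omega|^2/|\Omega|^2$ to obtain part~(1), and once with $f=\1_\Omega$ to obtain part~(2). In both cases the weak convergence of $\Lam_n$ to $\Lam$ is given, so the only things I need to check are that $f$ is a nonnegative $L^1$ function, that $f+\Lam_n$ is a tiling for every $n$, and that the separation constants $\delta(\Lam_n)$ are uniformly bounded below (the latter being implicit in the very notion of weak convergence used here, and required to invoke \lemref{lem:limtile}).

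For part~(1), I would first note that since $\Omega$ is bounded and measurable we have $\1_\Omega\in L^2(\R^d)$, so by Plancherel $\ft{\1}_\Omega\in L^2$, and hence $f=|\ft{\1}_\Omega|^2/|\Omega|^2$ is a nonnegative function lying in $L^1(\R^d)$. Since each $\Lam_n$ is a spectrum for $\Omega$, \lemref{lem:tilespec} tells us that $f+\Lam_n$ is a tiling for every $n$; moreover, by \eqref{eqP1.4} we have $\delta(\Lam_n)\ge\chi(\Omega)>0$, so the separation constants are uniformly bounded below. \lemref{lem:limtile} then yields that $f+\Lam$ is a tiling, and applying the converse direction of \lemref{lem:tilespec} we conclude that $\Lam$ is a spectrum for $\Omega$.

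For part~(2), the argument is even more direct: here $f=\1_\Omega$ is plainly a nonnegative $L^1$ function, the hypothesis that $\Omega+\Lam_n$ is a tiling is by definition the statement that $f+\Lam_n$ is a tiling, and by \eqref{eqP1.5} we have $\delta(\Lam_n)\ge\eta(\Omega)>0$, which supplies the required uniform separation. \lemref{lem:limtile} then gives that $\1_\Omega+\Lam$ is a tiling, that is, $\Omega+\Lam$ is a tiling.

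I do not expect any serious obstacle, since the corollary is really a repackaging of \lemref{lem:limtile} through the dictionary provided by \lemref{lem:tilespec}. The only point requiring genuine care is the uniform separation of the sets $\Lam_n$: it is essential that the lower bounds $\chi(\Omega)$ and $\eta(\Omega)$ depend only on $\Omega$ and not on $n$, which is precisely what \eqref{eqP1.4} and \eqref{eqP1.5} guarantee, so that the hypotheses of \lemref{lem:limtile} hold uniformly in $n$.
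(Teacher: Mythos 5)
Your proposal is correct and follows exactly the paper's route: apply \lemref{lem:limtile} with $f=|\ft{\1}_\Omega|^2/|\Omega|^2$ (translating via \lemref{lem:tilespec}) for part (i) and with $f=\1_\Omega$ for part (ii). The paper states this in one line; your added verifications --- that $f\in L^1(\R^d)$ by Plancherel, and that \eqref{eqP1.4} and \eqref{eqP1.5} give the uniform lower bound on the separation constants needed to invoke \lemref{lem:limtile} --- are precisely the details left implicit there.
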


\begin{remark*}
Using weak limits is a well-known technique in the analysis of frames and Riesz systems
of exponentials, which in this context goes back to Beurling. 
To our knowledge, so far this technique has not been used in the study of spectral sets.
\end{remark*}


\section{Orthogonality and Packing}

\subsection{}
If $f\geq0$ is a  measurable function on $\R^d$, and
$\Lam$ is a discrete set in $\R^d$, then following \cite{Kol00b} we say that $f+\Lam$ is a \emph{packing} if we have
\[
\sum_{\lam\in\Lam}f(x-\lam)\leq 1\quad\text{a.e.}
\]
\par
Notice that if $f=\1_\Omega$ then this just means that the sets $\Omega+\lam$ $(\lam\in\Lam)$
are pairwise disjoint up to measure zero. In this case we will say that $\Omega+\Lam$
is a packing.

\subsection{}\label{secAB}
Let $A,B$ be two discrete sets in $\R^d$, and $\tau$ be a vector in $\R^d$.
Suppose that each one of the three sets $B$, $B+\tau$, $B-\tau$ is disjoint from $A$,
and define
\[\Lam:=A\cup B,\quad\Lam':=A\cup (B+\tau),\quad \Lam'':=A\cup(B-\tau). \]
\begin{lem}
	\label{lemM2.1}
Let $f \geq 0$ be a measurable function on $\R^d$.
	Assume that $f+\Lam$ is a tiling, while $f+\Lam'$ and $f+\Lam''$ are packings. 
Then  $f+\Lam'$ and  $f+\Lam''$ are both tilings.
\end{lem}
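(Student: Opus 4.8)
The plan is to use the disjointness hypotheses to split each of the three sums $\sum_{\lam} f(x-\lam)$ into a contribution coming from $A$ and a contribution coming from $B$ (or a translate of it), and then to show that the two packing inequalities, taken together with the tiling equality, force a hidden periodicity of the $B$-part. To organize this, for a discrete set $D\subset\R^d$ I would write $F_D(x):=\sum_{d\in D} f(x-d)$, so that the hypotheses read $F_\Lam=1$ a.e., $F_{\Lam'}\le 1$ a.e. and $F_{\Lam''}\le 1$ a.e. Since each of $B$, $B+\tau$, $B-\tau$ is disjoint from $A$, the unions defining $\Lam,\Lam',\Lam''$ are disjoint, and the sums split as
\[ F_\Lam = F_A + F_B, \qquad F_{\Lam'}=F_A+F_{B+\tau}, \qquad F_{\Lam''}=F_A+F_{B-\tau}. \]
Using $F_{B+\tau}(x)=F_B(x-\tau)$ and $F_{B-\tau}(x)=F_B(x+\tau)$, these become $F_\Lam(x)=F_A(x)+F_B(x)$, $\;F_{\Lam'}(x)=F_A(x)+F_B(x-\tau)$ and $F_{\Lam''}(x)=F_A(x)+F_B(x+\tau)$. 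Note that the tiling hypothesis $F_A+F_B=1$ already guarantees that $F_A$ and $F_B$ are finite (indeed lie in $[0,1]$) a.e.

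The key step is then the following. Substituting $F_A=1-F_B$, valid a.e. by the tiling, into the two packing inequalities yields
\[ F_B(x-\tau)\le F_B(x)\quad\text{a.e.}, \qquad F_B(x+\tau)\le F_B(x)\quad\text{a.e.} \]
Replacing $x$ by $x-\tau$ in the second inequality, which is legitimate since Lebesgue measure is translation invariant, gives $F_B(x)\le F_B(x-\tau)$ a.e. Comparing this with the first inequality, the two estimates sandwich one another and force $F_B(x-\tau)=F_B(x)$ a.e.; in other words $F_B$ is $\tau$-periodic up to a null set. Feeding this periodicity back into the expressions above immediately gives $F_{\Lam'}(x)=F_A(x)+F_B(x)=1$ and $F_{\Lam''}(x)=F_A(x)+F_B(x)=1$ a.e., which is exactly the assertion that $f+\Lam'$ and $f+\Lam''$ are tilings.

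I do not expect a serious obstacle here, as the argument is purely algebraic once the translation-invariance trick is spotted. The only point requiring a little care is bookkeeping of the almost-everywhere statements: each identity or inequality holds off some null set, and the substitution $x\mapsto x-\tau$ introduces yet another one, so I would fix at the outset a single full-measure set on which all of these finitely many relations hold simultaneously and perform the pointwise comparison there. The conceptual heart of the proof is the realization that one shift being a packing \emph{and} the opposite shift being a packing pins $F_B$ between its two translates, thereby upgrading both packings to tilings.
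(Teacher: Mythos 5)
Your proof is correct and follows essentially the same route as the paper's: decompose the sums into the $A$-part and $B$-part, subtract the tiling identity $f_A+f_B=1$ from the two packing inequalities to get $f_B(x-\tau)\le f_B(x)$ and $f_B(x+\tau)\le f_B(x)$ a.e., and then use translation invariance of Lebesgue measure to sandwich these into the equality $f_B(x-\tau)=f_B(x)=f_B(x+\tau)$ a.e., which upgrades both packings to tilings. Your write-up is in fact slightly more explicit than the paper's on two minor points—the a.e.\ finiteness of $f_A,f_B$ that legitimizes the subtraction, and the bookkeeping of null sets under the substitution $x\mapsto x-\tau$—but the underlying argument is identical.
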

\begin{proof}
	Define
	\[ f_A(x):=\sum_{\lam\in A}f(x-\lam), \quad f_B(x):=\sum_{\lam\in B}f(x-\lam). \]
	Then the assumptions of the lemma mean that
\begin{enumerate-math}
	\item
		\label{eq:assumption1}
		$ f_A(x)+f_B(x)=1,$
		\item
		\label{eq:assumption2}
		$f_A(x)+f_B(x-\tau)\leq 1,$
		\item
		\label{eq:assumption3}
		$  f_A(x)+f_B(x+\tau)\leq 1$
\end{enumerate-math}
for a.e.\ $x \in \R^d$. Subtracting \ref{eq:assumption1} from each one of
\ref{eq:assumption2} and \ref{eq:assumption3} yields
	\begin{equation}
	\label{eq:conclude1}
f_B(x-\tau)\leq f_B(x), \quad 
f_B(x+\tau)\leq f_B(x)\quad \text{a.e.}
	\end{equation}
Translating by the vector $\tau$ one can then
see that \eqref{eq:conclude1} is only possible if
	 \[ f_B(x-\tau)=f_B(x)=f_B(x+\tau) \quad \text{a.e.},\] 
	 which implies the assertion of the lemma.
\end{proof}

\subsection{}
	Let $\Omega$ be a bounded, measurable set in $\R^d$.
The following lemma gives a criterion for the orthogonality of an exponential
system in $L^2(\Omega)$ by a packing condition:
\begin{lem}[\cite{Kol00b}]
	\label{lem:orthpack}
	A system of exponential functions $E(\Lam)$ is  orthogonal in $L^2(\Omega)$ if and only if  $f+\Lam$ is a packing,
	where $f:=|\ft{\1}_\Omega|^2/|\Omega|^2$.
\end{lem}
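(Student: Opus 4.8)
The plan is to mirror the proof of \lemref{lem:tilespec}, replacing Parseval's equality by Bessel's inequality: orthogonality without completeness should correspond to the packing inequality in exactly the same way that a spectrum corresponds to a tiling. Throughout I would use the two facts recorded earlier, namely $\|e_\lam\|^2_{L^2(\Omega)}=|\Omega|$ (obtained from $\dotprod{e_\lam}{e_{\lam'}}=\ft{\1}_\Omega(\lam'-\lam)$ by taking $\lam=\lam'$) and $\dotprod{e_\lam}{e_x}_{L^2(\Omega)}=\ft{\1}_\Omega(x-\lam)$.

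For the ``only if'' implication I would argue as follows. Assume $E(\Lam)$ is orthogonal. Applying Bessel's inequality to the function $g=e_x\in L^2(\Omega)$ gives
\[ \sum_{\lam\in\Lam}\frac{|\dotprod{e_\lam}{e_x}|^{2}}{|\Omega|}\le \|e_x\|^{2}=|\Omega|, \]
and inserting $\dotprod{e_\lam}{e_x}=\ft{\1}_\Omega(x-\lam)$ this amounts to $\sum_{\lam\in\Lam} f(x-\lam)\le 1$. Since this holds for every $x\in\R^d$, the function $f+\Lam$ is a packing.

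For the converse I would start from the packing hypothesis $\sum_{\lam\in\Lam}|\ft{\1}_\Omega(x-\lam)|^{2}\le|\Omega|^{2}$ for a.e.\ $x$. As $\ft{\1}_\Omega$ is continuous, the left-hand side is a sum of nonnegative continuous functions, hence lower semicontinuous, and an inequality of this type holding almost everywhere must then hold at \emph{every} point (otherwise the open set where it fails would have positive measure). Evaluating at a point $x=\lam_0\in\Lam$, the single term with $\lam=\lam_0$ already contributes $|\ft{\1}_\Omega(0)|^{2}=|\Omega|^{2}$, so every remaining term is forced to vanish; that is, $\ft{\1}_\Omega(\lam_0-\lam)=0$ for all $\lam\neq\lam_0$ in $\Lam$. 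This is precisely condition \eqref{eqP1.2}, which is equivalent to the orthogonality of $E(\Lam)$.

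I do not expect a genuine obstacle here; the argument is short and self-contained. The only point demanding a little care is the upgrade from ``almost everywhere'' to ``everywhere'' in the converse, which rests on the continuity of $\ft{\1}_\Omega$ and is the same device already used in \lemref{lem:tilespec}.
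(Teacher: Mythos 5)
Your proof is correct and is exactly what the paper intends: it states that Lemma~\ref{lem:orthpack} ``can be proved in a similar way to \lemref{lem:tilespec}'', and your argument is precisely that adaptation, with Bessel's inequality replacing Parseval's equality in one direction and the same continuity/lower-semicontinuity device upgrading the a.e.\ packing bound to an everywhere bound in the other. Nothing further is needed.
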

This can be proved  in a similar way to \lemref{lem:tilespec}.
\par
Combining Lemmas \ref{lem:tilespec}, \ref{lemM2.1} and \ref{lem:orthpack} we obtain:
\begin{corollary}
	\label{corM2.3}
Under the assumptions in \secref{secAB}, if the set $\Lam$ is a spectrum for $\Omega$,
and if both systems $E(\Lam')$ and $E(\Lam'')$  are orthogonal in $L^2(\Omega)$,
then these systems are also complete in $L^2(\Omega)$, that is, each one of the sets
$\Lam'$ and $\Lam''$ is a spectrum for $\Omega$.
\end{corollary}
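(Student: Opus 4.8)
The plan is to run all three of the preceding lemmas in sequence, with the single common choice $f := |\ft{\1}_\Omega|^2/|\Omega|^2$. The observation driving everything is that each notion appearing in the corollary (being a spectrum, orthogonality of the exponential system, and completeness) has been translated by Kolountzakis's lemmas into a statement about this one nonnegative function $f$. Once the translation is made uniformly, the purely combinatorial \lemref{lemM2.1} can be applied directly to pass from packings to tilings.

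First I would use the hypotheses to set up the tiling and packing conditions. Since $\Lam$ is a spectrum for $\Omega$, \lemref{lem:tilespec} gives that $f+\Lam$ is a tiling. Since $E(\Lam')$ and $E(\Lam'')$ are each orthogonal in $L^2(\Omega)$, \lemref{lem:orthpack} gives that $f+\Lam'$ and $f+\Lam''$ are each packings. At this point the hypotheses of \lemref{lemM2.1} are exactly in place: the decompositions $\Lam=A\cup B$, $\Lam'=A\cup(B+\tau)$, $\Lam''=A\cup(B-\tau)$ are furnished by \secref{secAB}, together with the required disjointness of $B$, $B+\tau$, $B-\tau$ from $A$, while the tiling/packing data are precisely what was just established. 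Applying \lemref{lemM2.1} to this same $f$ therefore yields that $f+\Lam'$ and $f+\Lam''$ are both tilings.

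Finally I would invoke \lemref{lem:tilespec} in the reverse direction: since $f+\Lam'$ is a tiling, $\Lam'$ is a spectrum for $\Omega$, and likewise $\Lam''$ is a spectrum for $\Omega$. Being a spectrum means the associated exponential system is a complete orthogonal system in $L^2(\Omega)$, so in particular $E(\Lam')$ and $E(\Lam'')$ are complete, which is the assertion. I do not anticipate any genuine obstacle: the whole content of the corollary is already packaged in the three lemmas, and the only thing that truly needs verifying is bookkeeping—that the function $f$ is literally identical in all three invocations, and that the disjointness hypotheses recorded in \secref{secAB} are exactly those required to license \lemref{lemM2.1}.
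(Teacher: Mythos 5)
Your proposal is correct and matches the paper's argument exactly: the paper obtains \corref{corM2.3} precisely by combining Lemmas \ref{lem:tilespec}, \ref{lemM2.1} and \ref{lem:orthpack} with the common function $f=|\ft{\1}_\Omega|^2/|\Omega|^2$, just as you describe. No gaps; your bookkeeping of the disjointness hypotheses from \secref{secAB} is the only verification needed, and you have it.
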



\section{Cylindric sets}

In this section we assume  $\Omega$ to be a cylindric set in $\R^d$, namely
\begin{equation}
	\label{eqOC1}
 \Omega=I\times \Sigma
\end{equation}
where $I\subset\R$ is an interval, and  $\Sigma$ is a bounded, measurable  set in $\R^{d-1}$.
\par
Moreover, we will assume for simplicity that $I=[-\frac{1}{2},\frac{1}{2}]$. In the next section
we will reduce the general situation to this more specific one by applying an affine transformation,
so this assumption will not result in any loss of generality.
\par
We shall denote a point $x\in\R^d$ as $x=(x_1,x_2)$, where $x_1\in\R$ and $x_2\in\R^{d-1}$.

\subsection{}
Due to \eqref{eqOC1} the Fourier transform of the indicator function $\1_\Omega$ is given by
 \begin{equation}
\label{eq:LamtOG}
\ft{\1}_\Omega(\xi)=\ft{\1}_I(\xi_1)\, \ft{\1}_\Sigma(\xi_2), \quad \xi = (\xi_1, \xi_2) \in \R \times \R^{d-1}.
\end{equation}
Since the zero set of the function $\hat{\1}_I$ is $\Z \setminus \{0\}$, it is easy to confirm the following:
\begin{lem}
\label{lemM1.1}
Let $\lam, \lam'$ be two points in $\R^d$.
The exponentials $e_{\lam}$ and $e_{\lam'}$ are orthogonal in $L^2(\Omega)$ if and only if
$\lam_1'-\lam_1$ is a non-zero integer, or $ \lam'_2 - \lam_2$ lies in the zero set of the function
$\ft{\1}_\Sigma$.
\end{lem}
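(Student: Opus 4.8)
The plan is to reduce the orthogonality of the two exponentials to a single scalar condition, namely the vanishing of $\ft{\1}_\Omega$ at the difference vector $\lam'-\lam$, and then to exploit the product structure \eqref{eq:LamtOG} of this Fourier transform. Recall that for any $\lam,\lam'\in\R^d$ one has $\dotprod{e_\lam}{e_{\lam'}}_{L^2(\Omega)}=\ft{\1}_\Omega(\lam'-\lam)$, so the exponentials $e_\lam$ and $e_{\lam'}$ are orthogonal in $L^2(\Omega)$ precisely when $\ft{\1}_\Omega(\lam'-\lam)=0$.

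Next I would substitute $\xi=\lam'-\lam$ into the factorization \eqref{eq:LamtOG}, so that $\xi_1=\lam_1'-\lam_1$ and $\xi_2=\lam_2'-\lam_2$, giving $\ft{\1}_\Omega(\lam'-\lam)=\ft{\1}_I(\lam_1'-\lam_1)\,\ft{\1}_\Sigma(\lam_2'-\lam_2)$. A product of two complex numbers vanishes if and only if at least one of the factors vanishes; hence orthogonality is equivalent to the disjunction that either $\ft{\1}_I(\lam_1'-\lam_1)=0$ or $\ft{\1}_\Sigma(\lam_2'-\lam_2)=0$. The second alternative is already in the form asserted by the lemma.

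It remains to identify the zero set of the first factor. Since $I=[-\tfrac12,\tfrac12]$, a direct computation gives $\ft{\1}_I(t)=\int_{-1/2}^{1/2}e^{-2\pi i ts}\,ds=\frac{\sin\pi t}{\pi t}$, whose zeros are exactly the nonzero integers; in particular $\ft{\1}_I(0)=1\neq 0$, so the origin is excluded. Thus the first alternative says precisely that $\lam_1'-\lam_1$ is a nonzero integer, and combining the two alternatives yields the stated characterization.

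I do not expect any genuine obstacle: the result is an immediate consequence of the multiplicative form \eqref{eq:LamtOG} together with the elementary evaluation of the sinc zero set, both of which are available in the excerpt. The only points meriting a word of care are the logical step that a product vanishes exactly when some factor does, and the remark that $\ft{\1}_I$ does not vanish at the origin, which is what correctly removes $0$ from the first alternative and produces the \emph{nonzero} integer condition.
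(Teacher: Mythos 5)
Your proof is correct and follows exactly the route the paper intends: the paper states the lemma as an immediate consequence of the product formula \eqref{eq:LamtOG} and the fact that the zero set of $\ft{\1}_I$ is $\Z\setminus\{0\}$, which is precisely the argument you spell out (including the relevant detail that $\ft{\1}_I(0)=1$ and $\ft{\1}_\Sigma(0)=|\Sigma|>0$, so equal points are correctly excluded). Nothing is missing.
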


\subsection{}
We also have the following parallel statement for tiling by translations of the cylindric set $\Omega$, although
it is not completely analogous to \lemref{lemM1.1}.
\begin{lem}
	\label{lemM1.2}
	Assume that $\Omega+\Lam$ is a tiling, and let $\lam,\lam'$
 be two distinct points in $\Lam$.  Then $\lam_1'-\lam_1$ is a non-zero integer,
or the set $( \Sigma+\lam_2)\cap(\Sigma+\lam_2')$  has  measure zero.
\end{lem}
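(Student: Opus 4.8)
The plan is to exploit the product structure of $\Omega$ together with Fubini's theorem, reducing the $d$-dimensional tiling to a family of one-dimensional tilings along vertical fibers. First I would observe that since $\Omega+\Lam$ is a tiling, for a.e.\ $(x_1,x_2)\in\R^d$ there is exactly one $\mu\in\Lam$ with $(x_1,x_2)\in\Omega+\mu$, that is, $x_1\in I+\mu_1$ and $x_2\in\Sigma+\mu_2$. By Fubini, for a.e.\ $x_2\in\R^{d-1}$ this holds for a.e.\ $x_1\in\R$; fixing such an $x_2$ and setting $\Lam_{x_2}:=\{\mu\in\Lam:\;x_2\in\Sigma+\mu_2\}$, the unit intervals $I+\mu_1$ $(\mu\in\Lam_{x_2})$ then form a partition of $\R$ up to measure zero, i.e.\ a one-dimensional tiling.

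Next I would use the elementary fact that a tiling of $\R$ by translates of the unit interval $I=[-\tfrac12,\tfrac12]$ must be integer-spaced: since overlaps of positive measure are forbidden while the whole line must be covered, abutting intervals force consecutive centers to differ by exactly $1$, so the set of centers $\{\mu_1:\mu\in\Lam_{x_2}\}$ is contained in a single coset $c(x_2)+\Z$, and in particular any two of its members differ by an integer. Moreover these centers are pairwise distinct, since two equal centers would produce two fully overlapping intervals, contradicting the packing property of the tiling.

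With these tools in place I would argue the contrapositive. Assume that $S:=(\Sigma+\lam_2)\cap(\Sigma+\lam_2')$ has positive measure, and deduce that $\lam_1'-\lam_1$ is a non-zero integer. Since the set of admissible $x_2$ (those whose fiber is tiled as above) has full measure, I can choose a point $x_2\in S$ belonging to it. Then $x_2\in\Sigma+\lam_2$ and $x_2\in\Sigma+\lam_2'$ give $\lam,\lam'\in\Lam_{x_2}$, so both $\lam_1$ and $\lam_1'$ lie in $c(x_2)+\Z$, whence $\lam_1'-\lam_1\in\Z$. Finally $\lam_1\neq\lam_1'$: otherwise $\lam$ and $\lam'$, being distinct points of $\Lam$, would be two distinct members of $\Lam_{x_2}$ with the same first coordinate, contradicting the distinctness of the centers established above. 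This yields the asserted alternative.

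The main obstacle I anticipate is the Fubini reduction itself: one must verify that the single $d$-dimensional null set of ``bad'' points $(x_1,x_2)$ projects, after integrating out the variable $x_1$, to a null set of bad $x_2$, so that almost every fiber is genuinely tiled in the one-dimensional sense; once this is arranged, the remaining steps are routine. I would also emphasize that this argument genuinely uses the covering part of the tiling, not merely the pairwise disjointness of the translates $\Omega+\lam$; this is precisely why one obtains the integrality of $\lam_1'-\lam_1$ rather than only the weaker bound $|\lam_1'-\lam_1|\ge 1$ that disjointness alone would supply.
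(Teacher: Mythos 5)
Your proof is correct, but it takes a genuinely different route from the paper's. The paper argues by induction on the smallest integer $n$ with $\lam_1'-\lam_1\le n$: assuming the common base set has positive measure, disjointness of the tiles first gives $|\lam_1'-\lam_1|\ge 1$; if the gap exceeds $1$, the covering property produces an intermediate tile $\lam''\in\Lam$ meeting the cylindric set $\left[\lam_1+\tfrac12,\lam_1'-\tfrac12\right]\times\bigl((\Sigma+\lam_2)\cap(\Sigma+\lam_2')\bigr)$ in positive measure, and disjointness forces $\lam_1+1\le\lam_1''\le\lam_1'-1$, so the inductive hypothesis applies to both pairs $(\lam,\lam'')$ and $(\lam'',\lam')$, making $\lam_1'-\lam_1$ a sum of two integers. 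Your argument instead slices: Fubini reduces the $d$-dimensional tiling to one-dimensional tilings of almost every vertical fiber, and the classical rigidity of tilings of $\R$ by unit intervals (the centers form a full coset of $\Z$) does the rest. Both proofs use the covering half of the tiling essentially, as you correctly emphasize. Your approach buys a slightly stronger conclusion — for almost every $x_2\in\R^{d-1}$, \emph{all} points $\mu\in\Lam$ with $x_2\in\Sigma+\mu_2$ have first coordinates lying in a single coset $c(x_2)+\Z$ — while the paper's induction treats one pair at a time but stays entirely elementary, with no measure-theoretic bookkeeping. For your Fubini step the only point to record is that $\Lam$ is countable (tiling implies uniform discreteness, as noted in Section 2 of the paper), so that $\sum_{\mu\in\Lam}\1_{\Omega+\mu}$ is a measurable function and the bad set is a genuine measurable null set whose a.e.\ slices are null; you anticipated exactly this, and it goes through without difficulty.
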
	
\begin{proof}
Suppose that  the set $( \Sigma+\lam_2)\cap(\Sigma+\lam_2')$
	in $\R^{d-1}$ has  positive  measure. We will show that in this case
$\lam_1'-\lam_1$ must be a non-zero integer.
\par
Observe first that we must have $|\lam_1'-\lam_1|\geq 1$, for otherwise the set $( \Omega+\lam)\cap(\Omega+\lam')$ in $\R^d$ would have positive measure, which contradicts the assumption that $\Omega+\Lam$ is a tiling. 
	By symmetry, we can therefore assume that $\lam'_1-\lam_1 \geq 1$.
\par
	The proof  is by induction on the value of the smallest integer $n$ such that $\lam'_1-\lam_1 \leq n$.
If $n=1$ then it means that $\lam_1'-\lam_1= 1$, so in this case the assertion is true.
\par
Now consider the case when $n>1$. Then we have $\lam'_1-\lam_1> 1$. Hence, if we 
consider the cylindric set in $\R^d$ defined by
	\[S:=\left[ \lam_1+\tfrac{1}{2},\lam_1'-\tfrac{1}{2}\right] \times \left( (\Sigma+\lam_2)\cap(\Sigma+\lam'_2)\right), \]
	then  $S$ has positive measure. Since $\Omega+\Lam$ is a tiling, there must therefore exist some $\lam''\in\Lam$ such that the set 
$(\Omega+\lam'')\cap S$ is of positive measure.
	Notice that for $\Omega+\lam''$ to intersect $S$ with positive measure, it is necessary and sufficient that 
	\[\lam_1<\lam''_1<\lam'_1\]
	and that the set
	\[(\Sigma+\lam_2)\cap (\Sigma+\lam'_2)\cap(\Sigma+\lam''_2) \] 
	in $\R^{d-1}$ has positive measure.
\par
However, since  the point $\lam''$ is different from both $\lam$ and $\lam'$, and since $\Omega+\Lam$ is a tiling, 
the set $\Omega+\lam''$ can intersect neither $\Omega+\lam$ nor $\Omega+\lam'$ with positive measure.
This implies that we must actually have
	\[ \lam_1+1\leq\lam''_1\leq\lam'_1-1 . \]
	We conclude that both $\lam''_1-\lam_1$ and $\lam'_1-\lam''_1$ cannot be greater than $n-1$.
	So by the inductive hypothesis it  follows that $\lam''_1-\lam_1$ and $\lam'_1-\lam''_1$ are both integers. 
	Thus also $\lam'_1-\lam_1$ must be a (non-zero) integer, as we had to show.
\end{proof}

\subsection{}\label{secC1.3}
Let $\Lam$ be a discrete set in $\R^d$. Given $t\in \R$ we consider a mapping $\alpha_t$ defined by
\[\alpha_t ( \lambda) := 
\begin{cases}
\lambda, & \lambda_1\in\Z,\\
\lambda+ \tau(t), & \lambda_1\notin\Z
\end{cases} 
\]
for each $\lam\in\Lam$, where $\tau(t):=(t,0,0,\dots,0)  \in\R^{d}$.
\begin{samepage}
\begin{lem}
\label{lemC1.4}
\quad
  	\begin{enumerate-math}
  		\item
  		\label{lem:alphat1}
  		Assume that $\Lam$ is a spectrum for $\Omega$. Then
  		 $\alpha_t$ is a one-to-one mapping on $\Lam$, and its image $\alpha_t(\Lam)$ is also a spectrum for $\Omega$.
	\item
  		\label{lem:alphat2}
Similarly, if $\Omega+\Lam$ is a tiling, then again 
  		 $\alpha_t$ is one-to-one on $\Lam$, and $\Omega+\alpha_t(\Lam)$ is also a tiling.
  	\end{enumerate-math}
  \end{lem}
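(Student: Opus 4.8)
The plan is to split $\Lam$ according to the integrality of the first coordinate and to recognize the resulting configuration as an instance of the setup in \secref{secAB}. Write
\[ A := \{\lam \in \Lam : \lam_1 \in \Z\}, \qquad B := \{\lam \in \Lam : \lam_1 \notin \Z\}, \]
so that $\Lam = A \cup B$, and with $\tau := \tau(t)$ set $\Lam' := A \cup (B + \tau)$ and $\Lam'' := A \cup (B - \tau)$. Since $\alpha_t$ fixes $A$ and translates $B$ by $\tau$, we have $\alpha_t(\Lam) = \Lam'$; hence in both parts it will suffice to prove that $\Lam'$ is, respectively, a spectrum for $\Omega$ or that $\Omega + \Lam'$ is a tiling, once we know $\alpha_t$ to be one-to-one.

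First I would verify the disjointness hypotheses of \secref{secAB}, namely that each of $B$, $B + \tau$, $B - \tau$ is disjoint from $A$. The set $B$ is disjoint from $A$ by construction. For $B + \tau$, suppose $\mu + \tau = a$ with $\mu \in B$, $a \in A$; then $\mu_2 = a_2$ and $a_1 - \mu_1 = t$. Since $a_1 \in \Z$ and $\mu_1 \notin \Z$, the difference $a_1 - \mu_1$ is not an integer, while $a_2 - \mu_2 = 0$ does not lie in the zero set of $\ft{\1}_\Sigma$ because $\ft{\1}_\Sigma(0) = |\Sigma| > 0$ (resp.\ $(\Sigma + a_2) \cap (\Sigma + \mu_2) = \Sigma + a_2$ has positive measure). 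This contradicts \lemref{lemM1.1} in the spectrum case, or \lemref{lemM1.2} in the tiling case; the same reasoning rules out a collision $\mu - \tau = a$. In particular $A \cap (B + \tau) = \emptyset$, which is exactly the statement that $\alpha_t$ is one-to-one on $\Lam$.

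For part \ref{lem:alphat1}, given that $\Lam$ is a spectrum it remains, by \corref{corM2.3}, to check that $E(\Lam')$ and $E(\Lam'')$ are orthogonal systems. Orthogonality of two exponentials depends only on the difference of their frequencies, and the differences within $A$, and within $B \pm \tau$, coincide with the corresponding differences in $\Lam$; so these pairs are orthogonal because $\Lam$ is a spectrum. The only genuinely new pairs consist of a point $a \in A$ and a point $\mu \pm \tau$ with $\mu \in B$. Here the crucial observation is that, since $a$ and $\mu$ are distinct points of the spectrum $\Lam$ with $a_1 - \mu_1 \notin \Z$, \lemref{lemM1.1} forces $a_2 - \mu_2$ into the zero set of $\ft{\1}_\Sigma$; but translation by $\pm\tau$ alters only the first coordinate, so the second-coordinate difference is unchanged and \lemref{lemM1.1} again yields orthogonality. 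Thus $E(\Lam')$ and $E(\Lam'')$ are orthogonal, and \corref{corM2.3} shows that each of $\Lam'$, $\Lam''$ is a spectrum for $\Omega$; in particular $\alpha_t(\Lam) = \Lam'$ is.

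Part \ref{lem:alphat2} runs along the same lines with $f := \1_\Omega$. Now $f + \Lam$ is a tiling, and by \lemref{lemM2.1} it suffices to show that $\Omega + \Lam'$ and $\Omega + \Lam''$ are packings. Two cylinders $\Omega + \lam$ and $\Omega + \lam'$ overlap in positive measure precisely when $|\lam_1 - \lam_1'| < 1$ and $(\Sigma + \lam_2) \cap (\Sigma + \lam_2')$ has positive measure; as before, pairs coming from within $A$ or within $B \pm \tau$ remain disjoint because their coordinate differences are unchanged. For a mixed pair $a \in A$ and $\mu \pm \tau$ with $\mu \in B$, the fact that $a_1 - \mu_1 \notin \Z$ together with \lemref{lemM1.2} forces $(\Sigma + a_2) \cap (\Sigma + \mu_2)$ to have measure zero, and this intersection is untouched by the first-coordinate shift, so the two cylinders stay disjoint. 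Hence $\Omega + \Lam'$ and $\Omega + \Lam''$ are packings, and \lemref{lemM2.1} upgrades them to tilings; in particular $\Omega + \alpha_t(\Lam) = \Omega + \Lam'$ is a tiling. The main point throughout — and the only step requiring real care — is this reduction of the mixed pairs to the base condition on the second coordinate, which the shift $\tau(t)$ leaves invariant.
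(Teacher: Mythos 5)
Your proof is correct and follows essentially the same route as the paper: the same partition of $\Lam$ into $A$ (integer first coordinate) and $B$, orthogonality/packing of both $\alpha_t(\Lam)$ and $\alpha_{-t}(\Lam)$ via Lemmas \ref{lemM1.1} and \ref{lemM1.2}, and the upgrade to spectrum/tiling via \corref{corM2.3} and \lemref{lemM2.1}. The only difference is that you spell out explicitly the disjointness hypotheses of \secref{secAB} and the invariance of the second-coordinate differences, details the paper leaves to the reader.
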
	
 \end{samepage}
This is a variant of \cite[Lemma 3.3]{IP98} where a similar result was proved
for spectra and tilings by the unit cube. Here we give an alternative proof of this lemma,
based on the results obtained above.

  \begin{proof}[Proof of \lemref{lemC1.4}]
	First we prove part \ref{lem:alphat1}.
	  Assume that $\Lambda$ is a spectrum for $\Omega$. Using 
	  \lemref{lemM1.1} one can see that the exponentials  $e_{\alpha_t(\lambda)}$ and 
	  $e_{\alpha_t(\lambda')}$ are orthogonal in $L^2(\Omega)$ whenever
	  $\lambda,\lambda'$ are two distinct points in $\Lambda$. Hence $\alpha_t$ is a
	  one-to-one mapping on $\Lambda$, and the system of exponentials
	  $E (\alpha_t(\Lambda) )$ is orthogonal in $L^2(\Omega)$.
\par
In a similar way, the
	  same is true also for the mapping $\alpha_{-t}$. 
\par
Consider a partition of $\Lambda$
	  into two disjoint sets 
	  \begin{equation}
		  \label{eqP1.A}
		  A:= \left\{ \lambda\in\Lambda\; :\; \lambda_1\in\Z \right\}, \quad 
		  B:= \left\{ \lambda\in \Lambda\; :\; \lambda_1\not\in \Z \right\}.
	  \end{equation}
	  Then we have 
	  \[ \alpha_t(\Lambda)= A\cup (B+\tau(t)), \quad \alpha_{-t}(\Lambda)= A\cup (B-\tau(t)). \]
	  Hence we may apply \corref{corM2.3}, which yields that each one of the  sets
	  $\alpha_{t}(\Lambda)$ and $\alpha_{-t}(\Lambda)$ is a spectrum for $\Omega$.
	This proves part \ref{lem:alphat1}.
\par
 Now	we turn to prove part \ref{lem:alphat2}.
	  Suppose that $\Omega+\Lambda$ is a tiling. Due to  \lemref{lemM1.2},
	 if $\lambda,\lambda'$ are two
	  distinct points in $\Lambda$, then $\lambda_1'-\lambda_1$
	  is a non-zero integer or $(\Sigma+\lambda_2)\cap (\Sigma+\lambda'_2)$ is a set
	  of measure zero in $\R^{d-1}$. In either case it follows that the intersection
	  of the two sets $\Omega+\alpha_t(\lambda)$ and $\Omega+\alpha_t(\lambda')$ has
	  measure zero. Hence, $\alpha_t$ is a one-to-one mapping on $\Lambda$, and
	  $\Omega+\alpha_t(\Lambda)$ is a packing. The same is true also for the mapping
	  $\alpha_{-t}$. 
\par
Consider again the partition of $\Lambda$ given by \eqref{eqP1.A}. This time we apply
	  \lemref{lemM2.1} with the function $f=\1_{\Omega}$, which implies that
	  $\Omega+\alpha_t(\Lambda)$ and $\Omega+\alpha_{-t}(\Lambda)$ are both tilings. 
	Hence part \ref{lem:alphat2} is also proved.
  \end{proof}


\subsection{}
\begin{samepage}
\begin{lem}
	\label{lemC2.1} 
\quad 
	\begin{enumerate-math}
	\item \label{lemC2.1_1} Suppose that $\Omega$ is a spectral set. Then
		$\Omega$ admits a spectrum $\Lambda$ satisfying 
		\begin{equation}
			\label{eqC2.1.1}
			\Lambda \subset \Z\times \R^{d-1}.
		\end{equation}
	\item \label{lemC2.1_2} Similarly, if $\Omega$ can tile by translations,
		then there is a set $\Lambda$ satisfying \eqref{eqC2.1.1} such
		that $\Omega+\Lambda$ is a tiling.
	\end{enumerate-math}
\end{lem}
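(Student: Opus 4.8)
The plan is to prove both parts in parallel: the spectrum case rests on \lemref{lemM1.1} and \corref{corB1.1}, the tiling case on \lemref{lemM1.2}, but the engine in both is \lemref{lemC1.4}, which lets us replace a spectrum (resp.\ tiling set) $\Lambda$ by $\alpha_t(\Lambda)$ for any $t\in\R$. I would use the maps $\alpha_{-t}$ with small $t>0$ to push those points of $\Lambda$ whose first coordinate is non-integer \emph{downward} in the $x_1$-direction toward the integers. The decisive feature of $\alpha_t$ is that it fixes every point with integer first coordinate; thus once a point has been moved onto an integer first coordinate (\emph{frozen}), no later modification can ever move it again. So I aim to build, by an infinite sequence of such modifications, a sequence of spectra $\Lambda_n$ that freezes more and more points and converges weakly to a limit $\Lambda'$ satisfying \eqref{eqC2.1.1}; then $\Lambda'$ is again a spectrum (resp.\ a tiling set) by \corref{corB1.1}.

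The key structural input I would first isolate is a separation property: in a spectrum $\Lambda$, any two points $\lambda,\lambda'$ whose first coordinates have \emph{different} fractional parts must satisfy $|\lambda_2-\lambda_2'|\ge\chi(\Sigma)$. Indeed, different fractional parts means $\lambda_1'-\lambda_1\notin\Z$, so by \lemref{lemM1.1} the difference $\lambda_2'-\lambda_2$ must be a non-zero element of the zero set of $\ft{\1}_\Sigma$, which by the quantity $\chi(\Sigma)$ defined as in \eqref{eqP1.4} is bounded away from the origin; in the tiling case \lemref{lemM1.2} and \eqref{eqP1.5} give the same statement with $\eta(\Sigma)$. Consequently, among the points of $\Lambda$ with $|\lambda_2|\le R$ only finitely many distinct fractional parts of $\lambda_1$ can occur, and I would exploit this region by region. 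At stage $R=1,2,3,\dots$ I repeatedly apply $\alpha_{-\mu}$, where $\mu$ is the smallest fractional part occurring among the currently non-integer points with $|\lambda_2|\le R$. Such a step freezes every such point of minimal fractional part and lowers the first coordinate of the others by $\mu$ without letting any of them cross an integer; hence each of these points stays inside its unit $x_1$-interval and decreases monotonically to its floor. Since only finitely many fractional parts are present in the region $|\lambda_2|\le R$, after finitely many steps every point with $|\lambda_2|\le R$ sits at an integer first coordinate.

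Concatenating all stages yields a single sequence $\Lambda_n$ of spectra (resp.\ tiling sets). The point of the construction is that once stage $R$ is completed every point with $|\lambda_2|\le R$ is frozen and is never moved again; since $B_R\subset\{|\lambda_2|\le R\}$, the intersection $\Lambda_n\cap B_R$ is eventually constant, which gives weak convergence of $\Lambda_n$ to a set $\Lambda'$ with $\Lambda_n\cap B_R=\Lambda'\cap B_R$ for large $n$. As every point of $\Lambda_n$ with $|\lambda_2|\le R$ has integer first coordinate for all large $n$, the limit satisfies $\Lambda'\subset\Z\times\R^{d-1}$, which is \eqref{eqC2.1.1}; and $\Lambda'$ is a spectrum (resp.\ $\Omega+\Lambda'$ is a tiling) by \corref{corB1.1}. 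I expect the main obstacle to be precisely the reason a naive purely global version of this argument fails: a priori $\Lambda$ may have infinitely many distinct fractional parts accumulating at $0$, so there need not be a globally smallest fractional part to push down, and shifting by a fixed amount would let points cross integers and drift off without ever freezing. The separation property is what removes this difficulty, since it forces all points with very small fractional part to have very large $|\lambda_2|$; these troublesome points therefore never enter any fixed ball and cannot spoil the stabilization on $B_R$, which is exactly why the region-by-region scheme converges.
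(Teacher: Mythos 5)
Your proof is correct, and it rests on the same pillars as the paper's own argument: the maps $\alpha_t$ and \lemref{lemC1.4} as the engine, the separation property extracted from \lemref{lemM1.1} (resp.\ \lemref{lemM1.2}) together with the constant $\chi(\Sigma)$ of \eqref{eqP1.4} (resp.\ $\eta(\Sigma)$ of \eqref{eqP1.5}), and the weak-limit step via \corref{corB1.1}. The genuine difference is how the induction is decomposed. The paper covers $\R^{d-1}$ by balls $U_n$ of radius $\chi(\Sigma)/3$ centered at points $y_n\to\infty$: two spectrum points lying over the same $U_n$ have $\lambda_2$-components at distance less than $\chi(\Sigma)$, so they automatically share one fractional part; hence there is a single number $t_n$ with $\Lambda_{n-1}\cap(\R\times U_n)\subset(\Z-t_n)\times U_n$, and one application of $\alpha_{t_n}$ freezes the whole slab over $U_n$ --- one small ``monochromatic'' region per step, with no counting and no ordering of fractional parts. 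You instead exhaust $\R^{d-1}$ by growing balls, use the same separation property to conclude that only finitely many fractional parts occur over each ball, and eliminate them one at a time, smallest first so that no point in the region crosses an integer. Both schemes succeed for the same underlying reason, which you identify precisely: $\alpha_t$ fixes every point with integer first coordinate and never alters $\lambda_2$, so frozen regions are permanent and no point can migrate into a ball $B_R$, making $\Lambda_n\cap B_R$ eventually constant and the weak limit well defined. Your smallest-first ordering is a nice touch but not strictly necessary (applying $\alpha_{-\mu}$ for any occurring fractional part $\mu$ still reduces the number of distinct fractional parts over the region by one, and wraparound of points outside the region is harmless), whereas the paper's small-ball decomposition buys the simplest possible inductive step at the cost of introducing the auxiliary covering sequence $\{y_n\}$.
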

\end{samepage}
\begin{proof}
	Suppose first that $\Omega$ is spectral, and let $\Lambda$ be a spectrum for
	$\Omega$. Let $\delta:= \chi(\Sigma)/3$, where the constant $\chi(\Sigma)$ is defined as in 
	\eqref{eqP1.4}. Choose a sequence $\{y_n\}\subset \R^{d-1}$ satisfying 
	\begin{equation}
		\label{eqC2.1.2.0}
	|y_n|\to \infty, \quad n\to\infty,
	\end{equation}
	and such that 
	\begin{equation}
		\label{eqC2.1.2}
		\bigcup_{n=1}^\infty U_n = \R^{d-1}
	\end{equation}
	where $U_n$  denotes the open ball in $\R^{d-1}$ of radius $\delta$ centered at the point $y_n$.
\par
We define by induction a sequence of sets $\Lambda_n$, each one of which
	is a spectrum for $\Omega$, in the following way. Let $\Lambda_0:=\Lambda$. Now
	suppose that the sets 
	$\Lambda_0, \Lambda_1, \dots , \Lambda_{n-1}$
	have already been defined. Since $\Lambda_{n-1}$ is a spectrum for $\Omega$, then
	by \lemref{lemM1.1} there is a number $t_n$, $0\le t_n<1$, such that 
	\begin{equation}
		\label{eqC2.1.5}
		\Lambda_{n-1}\cap (\R\times U_n)\subset (\Z-t_n)\times U_n. 
	\end{equation}
	Then we define 
	\[ \Lambda_n:=\alpha_{t_n}(\Lambda_{n-1}), \]
	where $\alpha_{t_n}$ is the mapping from \secref{secC1.3}. It follows from
	\lemref{lemC1.4} that also $\Lambda_n$ is a spectrum for $\Omega$.
\par
 Due to the
	choice of the number $t_n$ at the $n$'th step of the construction, and since the
	mapping $\alpha_{t_n}$ leaves fixed all the points belonging to $\Z\times
	\R^{d-1}$, it follows that 
	\begin{equation}
		\label{eqC2.1.3}
		\Lambda_n\cap (\R \times (U_1\cup\dots\cup U_n)) \subset \Z\times
		\R^{d-1}
	\end{equation}
	for every $n$. Moreover, by \eqref{eqC2.1.2.0} and \eqref{eqC2.1.2}, for any $R$ there is $N$ such that 
	\begin{equation}
		\label{eqC2.1.4}
		\Lambda_n\cap (\R\times B_R) = \Lambda_m\cap (\R\times B_R), \quad n,m \geq N,
	\end{equation}
	where  $B_R$  denotes the open ball in $\R^{d-1}$ of radius $R$ centered at the
	origin. The latter fact implies that the sequence $\Lambda_n$
	converges weakly to a certain set $\Lambda'$, which by \corref{corB1.1} is also a
	spectrum for $\Omega$. It follows from \eqref{eqC2.1.2} and \eqref{eqC2.1.3} that 
the new spectrum $\Lam'$ satisfies
	\begin{equation}
		\label{eqC2.1.6}
		\Lambda'\subset \Z\times \R^{d-1},
	\end{equation}
	which establishes part \ref{lemC2.1_1} of the lemma. 
\par
The proof of part
	\ref{lemC2.1_2} is along the same line. Assume that $\Omega+\Lambda$ is a tiling.
	We choose a sequence $\{y_n\}$ with the same properties, but for
	$\delta:=\eta(\Sigma)/3$, where $\eta(\Sigma)$ is defined as in \eqref{eqP1.5}. Then
	the construction is performed in the same way, where the existence of the number
	$t_n$ satisfying \eqref{eqC2.1.5} at the $n$'th step of the construction is now
	guaranteed by \lemref{lemM1.2}. Again we obtain a sequence $\Lambda_{n}$, and
	$\Omega+\Lambda_n$ is a tiling for every $n$ (\lemref{lemC1.4}). As before, the
	sequence $\Lambda_n$ converges weakly to a limit $\Lambda'$ satisfying
	\eqref{eqC2.1.6}, and $\Omega+\Lambda'$ is a tiling by \corref{corB1.1}. Thus
	part \ref{lemC2.1_2} is also proved.
\end{proof}

\subsection{} 
\begin{samepage}
\begin{lem}	
\label{lemC2.2}
\quad
\begin{enumerate-math}
\item \label{lem:Sigma2.1} 
Suppose that $\Omega$ admits a spectrum $\Lambda \subset \Z\times \R^{d-1}$. Then each one of the sets 
\begin{equation}\label{eqC2.1.1a}
\Gamma_k:=\{\gamma\in\R^{d-1} \,:\, (k,\gamma)\in\Lam\},\quad k\in\Z ,
\end{equation} 
constitutes a spectrum for  $\Sigma$.
\item \label{lem:Sigma2.2}
Similarly, if $\Omega+\Lam$ is a tiling and the set $\Lam$ is contained in $\Z\times\R^{d-1}$,
then $\Sigma+\Gamma_k$ is a tiling for each one of the sets $\Gamma_k$ defined by \eqref{eqC2.1.1a}.
\end{enumerate-math}
\end{lem}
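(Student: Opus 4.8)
The plan is to translate the spectrum and tiling hypotheses about $\Omega$ into tiling identities via \lemref{lem:tilespec} and \lemref{lem:orthpack}, and then to \emph{decouple} these identities over the integer first coordinate $k$. Throughout I write a point of $\R^d$ as $(x_1,x_2)$ with $x_1\in\R$, $x_2\in\R^{d-1}$, and I use the factorization $\ft{\1}_\Omega(\xi)=\ft{\1}_I(\xi_1)\,\ft{\1}_\Sigma(\xi_2)$ from \eqref{eq:LamtOG}. Since $|I|=1$ we have $|\Omega|=|\Sigma|$, so the function $f=|\ft{\1}_\Omega|^2/|\Omega|^2$ factors as $f(x_1,x_2)=|\ft{\1}_I(x_1)|^2\,g(x_2)$, where $g:=|\ft{\1}_\Sigma|^2/|\Sigma|^2$. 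The key elementary input is that $\Z$ is a spectrum for $I$, so \lemref{lem:tilespec} applied to $I$ gives the identity $\sum_{k\in\Z}|\ft{\1}_I(x_1-k)|^2=1$ for every $x_1$ (by continuity of $\ft{\1}_I$).

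For part \ref{lem:Sigma2.1} I would argue as follows. Writing $\Lam=\bigcup_{k\in\Z}\{k\}\times\Gamma_k$ and setting $G_k(x_2):=\sum_{\gamma\in\Gamma_k}g(x_2-\gamma)$, the tiling identity for $f+\Lam$ becomes
\[ \sum_{k\in\Z}|\ft{\1}_I(x_1-k)|^2\,G_k(x_2)=1\quad\text{a.e.} \]
First I note that the orthogonality of $E(\Lam)$ forces, for each fixed $k$, the system $E(\Gamma_k)$ to be orthogonal in $L^2(\Sigma)$: for $\gamma,\gamma'\in\Gamma_k$ with $\gamma\ne\gamma'$ the exponentials $e_{(k,\gamma)}$ and $e_{(k,\gamma')}$ are orthogonal in $L^2(\Omega)$, which by \lemref{lemM1.1} means exactly that $\gamma'-\gamma$ lies in the zero set of $\ft{\1}_\Sigma$. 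By \lemref{lem:orthpack} this says that $g+\Gamma_k$ is a packing, i.e.\ $0\le G_k\le 1$ a.e. Subtracting the displayed identity from the partition-of-unity identity $\sum_k|\ft{\1}_I(x_1-k)|^2=1$ then yields
\[ \sum_{k\in\Z}|\ft{\1}_I(x_1-k)|^2\,(1-G_k(x_2))=0, \]
a sum of nonnegative terms. Fixing $x_2$ outside a null set, the boundedness of the coefficients $1-G_k(x_2)\in[0,1]$ makes the left-hand side a continuous function of $x_1$, so it vanishes identically in $x_1$. Evaluating at $x_1=k\in\Z$, where $\ft{\1}_I$ vanishes at every nonzero integer, isolates the single surviving term and gives $G_k(x_2)=1$. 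Thus $g+\Gamma_k$ is a tiling for every $k$, and \lemref{lem:tilespec} shows that $\Gamma_k$ is a spectrum for $\Sigma$.

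For part \ref{lem:Sigma2.2} the same scheme is simpler. Setting $H_k(x_2):=\sum_{\gamma\in\Gamma_k}\1_\Sigma(x_2-\gamma)$, the tiling $\Omega+\Lam$ reads
\[ \sum_{k\in\Z}\1_I(x_1-k)\,H_k(x_2)=1\quad\text{a.e.} \]
Here the decoupling is immediate because the translates $I+k$ are pairwise disjoint: for a.e.\ $x_1$ exactly one indicator $\1_I(x_1-k)$ is nonzero. Using Fubini, for each fixed $k$ I can choose $x_1$ in the interior of $I+k$ (avoiding a null set) so that the identity collapses to $H_k(x_2)=1$ for a.e.\ $x_2$, which says precisely that $\Sigma+\Gamma_k$ is a tiling.

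The main obstacle is the decoupling step in part \ref{lem:Sigma2.1}: unlike the indicators $\1_I(\cdot-k)$, the functions $|\ft{\1}_I(\cdot-k)|^2$ have overlapping supports, so one cannot simply read off $G_k$ from a single slice $x_1=\text{const}$. The resolution is to exploit two special features of $I=[-\tfrac12,\tfrac12]$ at once: the exact identity $\sum_k|\ft{\1}_I(x_1-k)|^2\equiv 1$ together with nonnegativity (which turns the subtracted relation into a sum of nonnegative terms that must each vanish), and the vanishing of $\ft{\1}_I$ at the nonzero integers (which lets the evaluation $x_1=k$ select exactly one term). It is precisely the packing bound $G_k\le 1$, coming from orthogonality, that guarantees the coefficients are bounded and hence that the relevant $x_1$-function is continuous, justifying the pointwise evaluation at integer points.
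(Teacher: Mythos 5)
Your proof is correct, but it takes a genuinely different route from the paper's. The paper proves completeness of $E(\Gamma_k)$ in $L^2(\Sigma)$ by a duality argument in the spirit of \cite[Lemma 2]{JP99}: assuming some nonzero $f\in L^2(\Sigma)$ is orthogonal to every $e_\gamma$, $\gamma\in\Gamma_k$, it forms the product function $F(x,y)=e_k(x)f(y)\in L^2(\Omega)$ and checks, via the factorization $\dotprod{F}{e_{(m,\gamma)}}_{L^2(\Omega)}=\dotprod{e_k}{e_m}_{L^2(I)}\cdot\dotprod{f}{e_\gamma}_{L^2(\Sigma)}$, that $F$ is orthogonal to all of $E(\Lambda)$, contradicting completeness; part (ii) is declared obvious and skipped. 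You instead stay entirely inside the tiling/packing framework of the earlier sections: \lemref{lem:tilespec} converts the spectrum hypothesis into the tiling identity for $f=|\ft{\1}_\Omega|^2/|\Omega|^2$, the factorization \eqref{eq:LamtOG} together with the partition of unity $\sum_k|\ft{\1}_I(x_1-k)|^2\equiv 1$ rewrites it as $\sum_k|\ft{\1}_I(x_1-k)|^2\,G_k(x_2)=1$, and the packing bound $G_k\le 1$ (orthogonality of $E(\Gamma_k)$ via \lemref{lemM1.1}, then \lemref{lem:orthpack}) turns the difference into a sum of nonnegative terms that must all vanish, which you then isolate by evaluating at integer points $x_1=k$. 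Both arguments are sound; the paper's is shorter and needs only elementary Hilbert-space reasoning, while yours handles parts (i) and (ii) by one uniform decoupling mechanism (your part (ii) is exactly the ``obvious'' argument the paper omits) and runs Kolountzakis's equivalence in both directions, producing explicitly the tiling $g+\Gamma_k$ for each $k$. Two details you should write out in a full version: the continuity in $x_1$ of $\sum_k|\ft{\1}_I(x_1-k)|^2\bigl(1-G_k(x_2)\bigr)$ requires uniform convergence on compact sets, which follows from the bound $|\ft{\1}_I(t)|\le\min\bigl(1,(\pi|t|)^{-1}\bigr)$ rather than from continuity of the individual terms alone (the same remark applies to upgrading the identity $\sum_k|\ft{\1}_I(x_1-k)|^2=1$ from a.e.\ to everywhere); and the exceptional null sets should be handled by Fubini, fixing $x_2$ so that simultaneously the tiling identity holds for a.e.\ $x_1$ and $G_k(x_2)\le 1$ for every $k$ (countably many a.e.\ conditions). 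Alternatively, you can dispense with continuity altogether: once each nonnegative term vanishes for a.e.\ $x_1$, any non-integer $x_1$, where $\ft{\1}_I(x_1-k)\ne 0$, already gives $G_k(x_2)=1$.
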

\end{samepage}
Part \ref{lem:Sigma2.2} of this lemma is obvious, so we shall skip its proof. Part
\ref{lem:Sigma2.1} is a consequence of \cite[Lemma 2]{JP99}. For  completeness 
we include a self-contained proof of \ref{lem:Sigma2.1}.
\begin{proof}[Proof of part \ref{lem:Sigma2.1} of \lemref{lemC2.2}]
	Let $\Lambda \subset \Z\times \R^{d-1}$ be a spectrum for $\Omega$. Fix $k \in \Z$.
	 Observe that by \lemref{lemM1.1}, 
	if $\gamma, \gamma'$ are two distinct elements of $\Gamma_k$, then
	$\gamma'-\gamma$ must lie in the zero set of $\hat{\1}_\Sigma$. Hence the system
	$E(\Gamma_k)$ is orthogonal in $L^2(\Sigma)$.
\par
 It remains to prove that this system
	is also complete in $L^2(\Sigma)$. Suppose that this is not true. Then there is
	$f\in L^2(\Sigma)$ not identically zero a.e., such that 
	\begin{equation}
		\label{eqC2.1.4a}
		\dotprod{f}{e_\gamma}_{L^2(\Sigma)}=0, \quad \gamma\in\Gamma_k.
	\end{equation}
	Consider a function $F$ defined on $\Omega = I\times \Sigma$ by 
	\[ F(x,y):=e_k(x) f(y), \quad (x,y)\in I\times \Sigma. \]
	Then $F\in L^2(\Omega)$. We claim that $F$ is orthogonal in $L^2(\Omega)$ to all
	the elements of the system $E(\Lambda)$. Indeed, we have $\Lambda\subset \Z\times
	\R^{d-1}$, hence if $\lambda$ is a point in $\Lambda$, then it has the form
	$\lambda=(m,\gamma)$, where $m\in\Z$ and $\gamma\in \Gamma_m$. This implies that 
	\begin{equation}
		\label{eqC2.1.3a}
		\dotprod{F}{e_{\lambda}}_{L^2(\Omega)}=\dotprod{e_k}{e_m}_{L^{2}(I)}\cdot
		\dotprod{f}{e_\gamma}_{L^2(\Sigma)}.
	\end{equation}
	If $m\ne k$ then the first term on the right-hand side of \eqref{eqC2.1.3a}
	vanishes, while if $m=k$ then the second term must vanish due to
	\eqref{eqC2.1.4a}. This confirms that 
	\[\dotprod{F}{e_\lambda}_{L^2(\Omega)}=0, \quad \lambda\in\Lambda. \]
	But since $F$ does not vanish identically a.e., this contradicts the
	completeness of the system $E(\Lambda)$ in $L^2(\Omega)$. This contradiction
	concludes the proof. 
\end{proof}


\section{Conclusion of Theorems \ref{thmI1.3} and \ref{thmI1.6} } \label{secD1}
The main results now follow easily from the previous lemmas.
\par
  Let $\Omega = I \times \Sigma$ be a cylindric set in $\R^d$. By applying an affine 
transformation we may assume that $I=[-\frac{1}{2}, \frac{1}{2}]$
(it is well-known and easy to verify that the family of spectral sets, or the sets
which tile by translations, is invariant under invertible affine transformations).
\par
 If $\Omega$ is spectral, then by \lemref{lemC2.1} it
admits a spectrum $\Lambda\subset \Z\times \R^{d-1}$. Then \lemref{lemC2.2} implies that
$\Sigma$ is a spectral set. Conversely, assume that $\Sigma$ is a spectral set, and let
$\Gamma\subset \R^{d-1}$ be a spectrum for $\Sigma$. It is then easy to verify that
$\Lambda=\Z\times \Gamma$ is a spectrum for $\Omega$ (see, for example,
	\cite[Theorem 3]{JP99}), and hence $\Omega$ is also spectral. Thus \thmref{thmI1.3} is
	established. 
\par
The conclusion of the proof of \thmref{thmI1.6} is along the same line.


\section{Remark} \label{secR1}
By applying \thmref{thmI1.3} (respectively, \thmref{thmI1.6}) several times, we obtain the
following more general version of the results:
\begin{thm}
	\label{thmR1.1}
	Let $\Omega=Q\times \Sigma$, where $Q$ is a cube in $\R^n$, and $\Sigma$ is
	a bounded, measurable set in $\R^m$ $(n,m\ge 1)$. Then the set $\Omega \subset \R^{n+m}$ is spectral
	if and only if $\Sigma$ is a spectral set. Analogously, $\Omega$ tiles by translations
	if and only if $\Sigma$ tiles.
\end{thm}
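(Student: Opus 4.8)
The plan is to prove the statement by induction on $n$, the dimension of the cube $Q$, peeling off one edge of the cube at each step and invoking \thmref{thmI1.3} (respectively \thmref{thmI1.6}). The point is that the cylindric theorems already established allow us to trade one interval factor for a reduction in the ambient dimension, so after $n$ such reductions only the factor $\Sigma$ remains.

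For the base case $n=1$, a cube $Q$ in $\R^1$ is simply an interval $I$, so $\Omega = I \times \Sigma$ is exactly a cylindric set in the sense of \eqref{eqI1.2}. In this case the assertion that $\Omega$ is spectral if and only if $\Sigma$ is spectral is precisely \thmref{thmI1.3}, and the analogous statement for tiling is precisely \thmref{thmI1.6}, so there is nothing further to prove.

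For the inductive step I would assume the result for cubes of dimension $n-1$ and write an $n$-dimensional cube as $Q = I \times Q'$, where $I$ is one edge of $Q$ (an interval) and $Q'$ is an $(n-1)$-dimensional cube. This exhibits
\[ \Omega = I \times (Q' \times \Sigma) \]
as a cylindric set whose base is $Q' \times \Sigma$. Since $Q'$ is bounded and $\Sigma$ is bounded and measurable, the base $Q' \times \Sigma$ is a bounded, measurable subset of $\R^{(n-1)+m}$, so \thmref{thmI1.3} applies and gives that $\Omega$ is spectral if and only if $Q' \times \Sigma$ is spectral. But $Q' \times \Sigma$ is again of the same form, with the $(n-1)$-dimensional cube $Q'$ in place of $Q$, so the inductive hypothesis yields that $Q' \times \Sigma$ is spectral if and only if $\Sigma$ is spectral. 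Chaining the two equivalences completes the step for spectrality, and the tiling statement follows identically using \thmref{thmI1.6} in place of \thmref{thmI1.3}.

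I do not expect any serious obstacle: the argument is a direct iteration of the results already proved, and the only point requiring (routine) care is to verify at each stage that the intermediate base $Q' \times \Sigma$ remains a bounded, measurable set, so that the cylindric theorems are legitimately applicable. This holds automatically, since a product of bounded sets is bounded and measurable. I would also remark that the argument never uses the equality of the edge lengths of $Q$ — only that each factor peeled off is an interval — but since $Q$ is assumed to be a cube this is certainly satisfied.
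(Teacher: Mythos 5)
Your proof is correct and takes essentially the same approach as the paper: the paper deduces \thmref{thmR1.1} precisely by applying \thmref{thmI1.3} (respectively \thmref{thmI1.6}) several times, peeling off one interval factor of the cube at each step, which is exactly your induction written out formally. Nothing further is needed.
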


\subsection*{Note added in proof}
After the submission of this paper, we were informed by M.\ Kolountzakis that the 
``only if'' part of \thmref{thmI1.6} can be proved in the following more general form:
\emph{Let $A \subset \R^n$, $B \subset \R^m$ be two bounded, measurable sets.
If the set $\Omega=A \times B \subset \R^{n+m}$ can tile by translations,
then the same is true for both $A$ and $B$.}
It is not known whether the analogous assertion for spectral sets is also true.



\end{document}